\newtheorem{thm}{Theorem}[section]
\newtheorem{prop}[thm]{Proposition}
\newtheorem{cor}[thm]{Corollary}
\newtheorem{lem}[thm]{Lemma}
\newtheorem{conj}[thm]{Conjecture}
\newtheorem{exa}[thm]{Example}
\DeclareMathOperator{\nbc}{nbc}
\DeclareMathOperator{\isf}{isf}
\DeclareMathOperator{\ISF}{ISF}
\newcommand{\ben}{\begin{enumerate}}
\newcommand{\een}{\end{enumerate}}
\newcommand{\ble}{\begin{lem}}
\newcommand{\ele}{\end{lem}}
\newcommand{\bth}{\begin{thm}}
\renewcommand{\eth}{\end{thm}}
\newcommand{\bpr}{\begin{prop}}
\newcommand{\epr}{\end{prop}}
\newcommand{\bco}{\begin{cor}}
\newcommand{\eco}{\end{cor}}
\newcommand{\bcon}{\begin{conj}}
\newcommand{\econ}{\end{conj}}
\newcommand{\bde}{\begin{defn}}
\newcommand{\ede}{\end{defn}}
\newcommand{\bex}{\begin{exa}}
\newcommand{\eex}{\end{exa}}
\newcommand{\barr}{\begin{array}}
\newcommand{\earr}{\end{array}}
\newcommand{\btab}{\begin{tabular}}
\newcommand{\etab}{\end{tabular}}
\newcommand{\beq}{\begin{equation}}
\newcommand{\eeq}{\end{equation}}
\newcommand{\bea}{\begin{eqnarray*}}
\newcommand{\eea}{\end{eqnarray*}}
\newcommand{\bal}{\begin{align*}}
\newcommand{\bce}{\begin{center}}
\newcommand{\ece}{\end{center}}
\newcommand{\bpi}{\begin{picture}}
\newcommand{\epi}{\end{picture}}
\newcommand{\bpp}{\begin{picture}}
\newcommand{\epp}{\end{picture}}
\newcommand{\bfi}{\begin{figure} \begin{center}}
\newcommand{\efi}{\end{center} \end{figure}}
\newcommand{\bprf}{\begin{proof}}
\newcommand{\eprf}{\end{proof}\medskip}
\newcommand{\capt}{\caption}
\newcommand{\bsl}{\begin{slide}{}}
\newcommand{\esl}{\end{slide}}
\newcommand{\bfr}{\begin{frame}}
\newcommand{\efr}{\end{frame}}
\newcommand{\hqed}{\hfill \qed}
\newcommand{\eqed}[1]{$\textcolor{white}{\qed}\hfill{\dil#1}\hfill\qed$}
\newcommand{\hs}[1]{\hspace{#1}}
\newcommand{\hso}[1]{\hspace{-1pt}}
\newcommand{\vs}[1]{\vspace{#1}}
\newcommand{\emp}{\emptyset}
\newcommand{\sbs}{\subset}
\newcommand{\sbe}{\subseteq}
\newcommand{\setm}{\setminus}
\def\<{\langle}
\def\>{\rangle}
\newcommand{\ra}{\rightarrow}
\newcommand{\ka}{\kappa}
\newcommand{\la}{\lambda}
\newcommand{\1}{{\bf 1}}
\newcommand{\bx}{{\bf x}}
\newcommand{\bbN}{{\mathbb N}}
\newcommand{\bbP}{{\mathbb P}}
\newcommand{\bbR}{{\mathbb R}}
\newcommand{\bbZ}{{\mathbb Z}}
\newcommand{\cB}{{\cal B}}
\newcommand{\cC}{{\cal C}}
\newcommand{\cF}{{\cal F}}
\newcommand{\cH}{{\cal H}}
\newcommand{\cI}{{\cal I}}
\newcommand{\cN}{{\cal N}}
\newcommand{\cO}{{\cal O}}
\newcommand{\cS}{{\cal S}}
\newcommand{\dil}{\displaystyle}
\begin{document}
\pagestyle{plain}

\title{The Amazing Chromatic Polynomial
}
\author{
Bruce E. Sagan\\[-5pt]
\small Department of Mathematics, Michigan State University,\\[-5pt]
\small East Lansing, MI 48824-1027, USA, {\tt sagan@math.msu.edu}
}

\date{\today\\[10pt]
	\begin{flushleft}
	\small Key Words: acyclic orientation, chromatic polynomial, hyperplane arrangement, increasing tree, M\"obius function, partially ordered set
	                                       \\[5pt]
	\small AMS subject classification (2010):  05-02  (Primary) 05C05, 05C32, 06A07  (Secondary)
									   %     https://mathscinet.ams.org/mathscinet/msc/msc2020.html
	\end{flushleft}}

\maketitle

\begin{abstract}
Let $G$ be a combinatorial graph with vertices $V$ and edges $E$.  A proper coloring of $G$ is an assignment of colors to the vertices such that no edge connects two vertices of the same color.  These are the colorings considered in the famous Four Color Theorem.  It turns out that the number of proper colorings of $G$ using $t$ colors is a polynomial in $t$, called the chromatic polynomial of $G$.  This polynomial has many wonderful properties.  It also has the surprising habit of appearing in contexts which, a priori, have nothing to do with graph coloring.  We will survey three such instances involving acyclic orientations, hyperplane arrangements, and increasing forests.  In addition, connections to symmetric functions and algebraic geometry will be mentioned.
\end{abstract}

%%%%%%%%%%%%%%%%%%%%%%%%%%%%%%%%%%%%%%%%%%%%%%

\section{Introduction}
\label{i}

We begin at the beginning with some basic definitions.  Let $\bbN$ and $\bbP$ be the nonnegative and positive integers, respectively.  Given $n\in\bbN$ we will use the notation $[n]=\{1,2,\ldots,n\}$ for the interval of the first $n$ positive integers.

A {\em (combinatorial) graph} is $G=(V,E)$ where $V$ is a finite set of vertices and $E$ is a set of edges, each edge connecting a pair of distinct vertices.  If edge $e$ goes between vertices $u$ and $v$ then we write $e=uv$ or $e=vu$ and call $u,v$ the {\em endpoints} of $e$.  
Alternatively, we say that $u$ and $v$ are {\em adjacent}.
As an example, on the left in Figure~\ref{G} we have a graph $G$ with vertex set $V=\{u,v,w,x\}$ and edge set $E=\{uv, ux, vx, vw\}$.

Given a set $S$, a {\em coloring} of $G$ by $S$ is a function $\ka:V\ra S$.  Figure~\ref{G} displays colorings of $G$ using the set $S=[3]$.  For example, the coloring in the middle has $\ka(v)=\ka(x)=1$ and
$\ka(u)=\ka(w)=2$.  (A coloring function need not be surjective.)  We say that a coloring $\ka$ is {\em proper} if each edge $e=uv$ of $G$ we have $\ka(u)\neq\ka(v)$.  Returning to Figure~\ref{G} we see that the middle coloring is not proper since $e=vx$ has  $\ka(v)=\ka(x)$.  But it is easy to check that the coloring $\ka'$ on the right is proper.

\bfi
\begin{tikzpicture}
\draw(1,-.8) node{$G$};
\draw (2,2)--(0,2)--(0,0)--(2,2)--(2,0);
\fill(0,0) circle(.1);
\draw(-.5,0) node{$x$};
\fill(2,0) circle(.1);
\draw(2.5,0) node{$w$};
\fill(0,2) circle(.1);
\draw(-.5,2) node{$u$};
\fill(2,2) circle(.1);
\draw(2.5,2) node{$v$};
\end{tikzpicture}
\hs{20pt}
\begin{tikzpicture}
\draw(1,-.8) node{$\ka$};
\draw (2,2)--(0,2)--(0,0)--(2,2)--(2,0);
\fill(0,0) circle(.1);
\draw(-.5,0) node{$1$};
\fill(2,0) circle(.1);
\draw(2.5,0) node{$2$};
\fill(0,2) circle(.1);
\draw(-.5,2) node{$2$};
\fill(2,2) circle(.1);
\draw(2.5,2) node{$1$};
\end{tikzpicture}
\hs{20pt}
\begin{tikzpicture}
\draw(1,-.8) node{$\ka'$};
\draw (2,2)--(0,2)--(0,0)--(2,2)--(2,0);
\fill(0,0) circle(.1);
\draw(-.5,0) node{$1$};
\fill(2,0) circle(.1);
\draw(2.5,0) node{$2$};
\fill(0,2) circle(.1);
\draw(-.5,2) node{$2$};
\fill(2,2) circle(.1);
\draw(2.5,2) node{$3$};
\end{tikzpicture}
\capt{A graph and two colorings \label{G}}
\efi

Proper colorings are the subject of the famous Four Color Theorem.  
If $S$ is a finite set then we use $\#S$ or $|S|$ for the cardinality of $S$.
The {\em chromatic number} of $G$, denoted $\chi(G)$, is the minimum $\#S$ such that there exists a proper coloring 
$\ka:G\ra S$.  Going back yet again to the graph in Figure~\ref{G}, we see that $\chi(G)=3$.  Indeed, $\ka'$ is a proper coloring with three colors.  And no proper coloring exists with fewer colors because of the triangle $\{uv, vx, xu\}$ which requires three colors.  Call a graph {\em planar} if it can be drawn in the Cartesian plane so that none of its edges cross.  Here is the landmark theorem of Appel and Haken (assisted by Koch).
\begin{thm}[The Four Color Theorem~\cite{ah:epm1,ah:epm2}]
If $G$ is a planar graph then 

\vs{8pt}

\eqed{
\chi(G)\le 4.
}
\end{thm}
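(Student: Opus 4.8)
The plan is to argue by contradiction, using the classical strategy of a minimal counterexample together with the discharging method to produce an unavoidable set of reducible configurations. First I would reduce to the case of planar \emph{triangulations} (every face a triangle): edges can be added to any planar graph until every face is a triangle without destroying planarity, and a proper coloring of the larger graph restricts to a proper coloring of the original. Hence it suffices to $4$-color every triangulation, and I assume for contradiction that some triangulation is not $4$-colorable, choosing one, call it $G$, with the fewest vertices.

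Next I would extract degree information from Euler's formula $\#V-\#E+\#F=2$. In a triangulation each face has three bounding edges and each edge borders two faces, so $2\#E=3\#F$; substituting gives $\#E=3\#V-6$, whence the average degree $2\#E/\#V$ is strictly less than $6$. Thus $G$ contains a vertex of degree at most $5$. This disposes of the easy cases: a vertex of degree $\le 3$ can be deleted, its neighbors using at most three colors, and by minimality the rest is $4$-colored and the vertex receives a leftover color. The degree-$4$ and degree-$5$ situations are attacked by \emph{Kempe-chain} recoloring, swapping the two colors along a maximal two-colored connected subgraph to free a color; indeed this argument, pushed through degree $5$, already proves the weaker Five Color Theorem.

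The crux---and the step I expect to be by far the main obstacle---is that Kempe's original degree-$5$ argument is \emph{false} (Heawood's counterexample exhibits two interlocking Kempe chains that cannot be swapped independently), so $4$ colors are not reachable by such naive local recoloring. To close the gap I would instead build a finite \emph{unavoidable set} of local configurations, meaning every triangulation contains at least one member, and prove each is \emph{reducible}, meaning any $4$-coloring of the graph obtained by deleting the configuration's interior extends across the whole graph. Unavoidability I would establish by the \emph{discharging method}: assign each vertex $v$ the charge $6-\deg v$, so the total charge is $6\#V-2\#E=12>0$ by the edge count above; then specify rules that redistribute charge between nearby vertices. Since the total is conserved and positive, some vertex must retain positive charge, and a careful analysis of its local neighborhood forces one of the chosen configurations to appear.

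The remaining obstacle is reducibility, which is genuinely computational: for each configuration one must verify that the finitely many $4$-colorings of its boundary cycle which do not extend directly can be converted, via Kempe-chain swaps, into colorings that do (so-called $C$- and $D$-reducibility). Since no small unavoidable set of easily reducible configurations is known, this forces hundreds of cases---$633$ configurations under the Robertson--Sanders--Seymour--Thomas streamlining of the Appel--Haken scheme---each checked by machine. Assembling unavoidability with reducibility contradicts the existence of the minimal counterexample $G$, and I would conclude that every planar graph satisfies $\chi(G)\le 4$.
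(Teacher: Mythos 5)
The paper does not prove this theorem at all: it states the Four Color Theorem as a landmark result and cites Appel--Haken \cite{ah:epm1,ah:epm2}, explicitly noting that the proof was the first to rely essentially on computer verification because the number of cases was beyond human checking. So there is no in-paper argument to compare yours against.

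Your outline is an accurate description of the architecture of the actual proof --- reduction to triangulations, Euler's formula giving a vertex of degree at most $5$, Kempe chains handling degrees $\le 4$, the failure of Kempe's degree-$5$ argument, and the discharging/unavoidability/reducibility scheme --- and you correctly identify where the real difficulty lies. But as written it is a roadmap, not a proof. The two load-bearing steps are left entirely unspecified: you never exhibit the discharging rules or the resulting unavoidable set (the claim that ``a careful analysis of its local neighborhood forces one of the chosen configurations to appear'' is precisely the content that occupies the bulk of \cite{ah:epm1}), and you never verify reducibility of a single configuration, instead noting that this requires machine checking of hundreds of cases. Since no one has found a way to compress these steps into a human-checkable argument, the gap is not a stylistic omission but the entire substance of the theorem; your proposal establishes, at best, the Five Color Theorem plus a plan for the rest. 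A complete submission would have to either carry out (or formally cite) a specific unavoidable set with verified discharging rules and reducibility certificates, as Appel--Haken and later Robertson--Sanders--Seymour--Thomas did.
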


This theorem is striking for several reasons.  First of all, there is no such bound for arbitrary graphs.  For consider the {\em complete graph}, $K_n$, which has $n$ vertices and all $\binom{n}{2}$ possible edges.  A drawing of $K_4$ is on the left in Figure~\ref{KC}.  (The crossing of two edges in the middle of the graph is not a vertex.)  Clearly $\chi(K_n)=n$ which can be as large as desired.  Second, this result had been conjectured for over $100$ years.  Finally, the proof was the first one to use computers in an integral way since the number of cases involved was too large for a human to check. The reader interested in the history of the Four Color Theorem is encouraged to consult Robin Wilson's excellent book~\cite{wil:fcs}.

Finding $\chi(G)$ is an extremal task since it involves minimization.  In this article we will be interested in a corresponding enumeration problem which was first studied by George Birkhoff~\cite{bir:dfn}.  Given a graph $G=(V,W)$ and $t\in\bbN$ the corresponding {\em chromatic polynomial} is
$$
P(G)=P(G;t)=\#\{\ka:V\ra[t] \mid 
\text{$\ka$ is proper}\}.
$$
Note that we could have used any set $S$ with $\#S=t$ in place of $[t]$ and gotten the same count. Also, it is not clear why we are calling this a polynomial.  But let's compute it for our perennial example in Figure~\ref{G}.
We will color the vertices in the order $u,v,w,x$.
Since $u$ is the first vertex colored, any of the $t$ elements in $[t]$ could be used.  So there are $t$ choices for $u$.  When we color $v$, it can be any color except the one used on $u$.  This gives $t-1$ choices.  Similarly there are $t-1$ choices for $w$.  Finally, when coloring $x$ we see that it is adjacent to the two previously colored vertices $u$ and $v$.  Furthermore, $u$ and $v$ are different colors since they are also adjacent.  This means that there are $t-2$ possible remaining colors for $x$.  Putting all these counts together, we see that the number of proper colorings of $G$  is
\begin{equation}
\label{P(G)ex}
 P(G;t)= t(t-1)(t-1)(t-2)= t^4 - 4 t^3 + 5 t^2 - 2 t.   
\end{equation}
Notice that this is a polynomial in $t$, the number of colors!  It turns out that this is always the case, which explains why $P(G;t)$ is called the chromatic polynomial.  

However, it is not true that one can always count the colorings as we did above and so obtain a polynomial whose roots are nonnegative integers.  To see what could go wrong, consider the {\em $n$-cycle}, $C_n$, which has $n$ vertices 
which can be ordered as $v_1,v_2,\ldots,v_n$ and $n$ edges $v_i v_{i+1}$ where $i$ is taken modulo $n$.  A copy of the cycle $C_4$ is shown on the right in Figure~\ref{KC}.  Let us now try coloring $C_4$ in the order $u,v,w,x$.  As before, there are $t$ choices for $u$ and $t-1$ for $v$ and $w$.  But we now have a problem trying to color $x$.  For this vertex has edges to both $u$ and $w$.  But since $u$ and $w$ are not adjacent, we do not know whether they were assigned the same color or not.  There is an elegant way around this difficulty called deletion-contraction which we will discuss in the next section.  

We should also note that there is a simple relationship between the chromatic number and the chromatic polynomial.  Specifically, $\chi(G)$ is the smallest positive integer such that $P(G;\chi(G))\neq 0$.  To see this note that if $0<t<\chi(G)$ then, by definition of $\chi$, there are no proper colorings of $G$ with $t$ colors.  So, since $P(G;t)$ counts the number of such colorings, it must evaluate to zero.  On the other hand, there must be at least one proper coloring of $G$ with $t=\chi(G)$ colors.  It follows that
$P(G;\chi(G))> 0$.

The rest of this paper is organized as follows.  In the next section we will introduce the method of deletion-contraction.  It will be used to prove that $P(G;t)$ is always a polynomial in $t$ as well as to compute the chromatic polynomial of $C_4$.  We will also exhibit a combinatorial interpretation of the coefficients of $P(G;t)$ in terms of certain sets of edges of $G$ which are said to contain no broken circuit.  One of the amazing things about $P(G;t)$ is that it often appears in contexts which seem to have nothing to do with graph coloring, or even with graphs!  Three examples of this will be given in Section~\ref{ta}.  We will end with a section giving more information about the chromatic polynomial, including connections with symmetric functions and with algebraic geometry.

\bfi
\begin{tikzpicture}
\draw(1,-.8) node{$K_4$};
\draw (2,2)--(0,2)--(0,0)--(2,2)--(2,0) (0,0)--(2,0)--(0,2);
\fill(0,0) circle(.1);
\draw(-.5,0) node{$x$};
\fill(2,0) circle(.1);
\draw(2.5,0) node{$w$};
\fill(0,2) circle(.1);
\draw(-.5,2) node{$u$};
\fill(2,2) circle(.1);
\draw(2.5,2) node{$v$};
\end{tikzpicture}
\hs{40pt}
\begin{tikzpicture}
\draw(1,-.8) node{$C_4$};
\draw (2,2)--(0,2)--(0,0)--(2,0)--(2,2);
\fill(0,0) circle(.1);
\draw(-.5,0) node{$x$};
\fill(2,0) circle(.1);
\draw(2.5,0) node{$w$};
\fill(0,2) circle(.1);
\draw(-.5,2) node{$u$};
\fill(2,2) circle(.1);
\draw(2.5,2) node{$v$};
\end{tikzpicture}
\capt{The complete graph $K_4$ and the cycle $C_4$ \label{KC}}
\efi

%%%%%%%%%%%%%%%%%%%%%%%%%%%%%%%%%%%%%%%%%%%%%%

\section{Why is it a polynomial?}
\label{wp}

In this section we will prove that $P(G;t)$ is actually a polynomial in $t$ by using the deletion-contraction method.  We will also define NBC (no broken circuit) sets and use them to describe the coefficients of this polynomial.

\bfi
\begin{tikzpicture}
\draw(1,-.8) node{$G$};
\draw (2,2)--(0,2)--(0,0)--(2,2)--(2,0);
\fill(0,0) circle(.1);
\draw(-.5,0) node{$x$};
\fill(2,0) circle(.1);
\draw(2.5,0) node{$w$};
\fill(0,2) circle(.1);
\draw(-.5,2) node{$u$};
\fill(2,2) circle(.1);
\draw(2.5,2) node{$v$};
\end{tikzpicture}
\hs{20pt}
\begin{tikzpicture}
\draw(1,-.8) node{$G\setm vx$};
\draw (2,0)--(2,2)--(0,2)--(0,0);
\fill(0,0) circle(.1);
\fill(0,0) circle(.1);
\draw(-.5,0) node{$x$};
\fill(2,0) circle(.1);
\draw(2.5,0) node{$w$};
\fill(0,2) circle(.1);
\draw(-.5,2) node{$u$};
\fill(2,2) circle(.1);
\draw(2.5,2) node{$v$};
\end{tikzpicture}
\hs{20pt}
\begin{tikzpicture}
\draw(1,-.8) node{$G/vx$};
\draw (2,0)--(0,2);
\fill(2,0) circle(.1);
\draw(2.5,0) node{$w$};
\fill(0,2) circle(.1);
\draw(-.5,2) node{$u$};
\fill(1,1) circle(.1);
\draw(1.3,1.3) node{$v'$};
\end{tikzpicture}
\capt{Deletion and contraction \label{DCfig}}
\efi

If $G=(V,E)$ is a graph and $e\in E$ then {\em deleting $e$} from $G$ gives a graph $G\setm e$ on the same vertex set and with edges the set difference $E\setm\{e\}$. The central graph in Figure~\ref{DCfig} shows the result of deleting the edge $e=vx$ from our canonical graph $G$.  The {\em contraction of $e=vx$} in $G$, denoted $G/e$, is obtained by collapsing the edge to a new vertex $v'$ where $v'$ is adjacent to all the vertices which were adjacent either the $v$ or $x$.  All other vertices and edges stay the same in $G/e$.  The graph on the right in Figure~\ref{DCfig} illustrates $G/vx$.  Note also that the two edges $uv$ and $ux$ in $G$ become a single edge $uv'$ in $G/vx$.  Since both $G\setm e$ and $G/e$ have fewer edges than $G$, the next result is a perfect recursion for induction on the number of edges.
\begin{lem}[Deletion-Contraction Lemma]
Given a graph $G=(V,E)$ and $e\in E$ we have
$$
P(G) = P(G\setm e) - P(G/e).
$$
\end{lem}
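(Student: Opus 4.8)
The plan is to prove the identity by a direct counting argument, using the proper colorings of $G\setm e$ as a master count and partitioning them according to the colors received by the two endpoints of $e$. Write $u,v$ for the endpoints of $e$. A proper coloring of $G\setm e$ is a map $\ka\colon V\ra[t]$ that respects every edge of $G$ except possibly $e$, since deleting $e$ removes exactly that one constraint. I would split the set of all such colorings into two disjoint classes: those with $\ka(u)\neq\ka(v)$ and those with $\ka(u)=\ka(v)$, and identify each class with a count already appearing in the statement.

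For the first class, I would observe that a proper coloring of $G\setm e$ with $\ka(u)\neq\ka(v)$ automatically satisfies the constraint imposed by $e$, and so is precisely a proper coloring of the full graph $G$; conversely, every proper coloring of $G$ falls into this class. This identification is immediate, so the first class is counted by $P(G)$.

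For the second class, I would set up a bijection with the proper colorings of $G/e$. Given $\ka$ on $G\setm e$ with $\ka(u)=\ka(v)$, I assign to the merged vertex $v'$ their common color and leave every other vertex colored as before; conversely a proper coloring of $G/e$ pulls back by giving both $u$ and $v$ the color of $v'$. The point to verify is that this correspondence preserves properness: each edge of $G/e$ incident to $v'$ arises from an edge of $G$ incident to $u$ or $v$ (other than $e$ itself), so the inequality constraints match up exactly. The step I expect to require the most care is that contraction can identify two distinct edges of $G$ into a single edge of $G/e$ — as with $uv$ and $ux$ in Figure~\ref{DCfig} — but since properness depends only on whether the two endpoints of an edge receive different colors, collapsing such parallel edges leaves the count of proper colorings unchanged. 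Hence the second class is counted by $P(G/e)$.

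Combining the two classes gives $P(G\setm e)=P(G)+P(G/e)$, and rearranging yields the claimed identity $P(G)=P(G\setm e)-P(G/e)$. Since the argument is purely combinatorial and holds for every $t\in\bbN$, it establishes the equality as an identity of integer-valued functions of $t$, which is exactly the recursion needed to drive the subsequent induction on the number of edges.
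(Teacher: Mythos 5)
Your proposal is correct and follows essentially the same route as the paper: both prove the identity in the rearranged form $P(G\setm e)=P(G)+P(G/e)$ by partitioning the proper colorings of $G\setm e$ according to whether the endpoints of $e$ receive equal colors, identifying the unequal class with $P(G)$ and the equal class with $P(G/e)$ via the merged vertex. Your extra remark about parallel edges collapsing under contraction is a welcome point of care that the paper only notes in passing.
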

\begin{proof}
We will prove this result in the form
\begin{equation}
\label{P(G-e)}
   P(G\setm e)= P(G) + P(G/e).  
\end{equation}
Suppose $e=vx$.  Since $v$ and $x$ are no longer adjacent in $G\setm e$, the proper colorings $\ka$ of this graph are of two types: those where 
$\ka(v)\neq\ka(x)$ and those where $\ka(v)=\ka(x)$.
But if $\ka$ is proper on $G\setm e$ and also satisfies
$\ka(v)\neq\ka(x)$ then $\ka$ is a proper coloring of $G$.  Conversely, every proper coloring of $G$ gives rise to a proper coloring of $G\setm e$ where $\ka(v)\neq\ka(x)$.  So these colorings of $G\setm e$ are counted by $P(G)$.

Now consider the proper colorings $\ka$ of $G\setm e$ with $\ka(v)=\ka(x)$.  Such a coloring can be lifted to a proper coloring $\ka'$ of $G/e$ where $\ka'(w)=\ka(w)$ for $w\neq v'$, and $\ka'(v')$ is the common color assigned to $v$ and $x$ by $\ka$.  As with colorings of the first type, this produces a bijections between the proper colorings of $G/e$ and those of  $G\setm e$ with $\ka(v)=\ka(x)$.  It follows that the number of colorings in this case is $P(G/e)$.  Combining the two possibilties yields equation~\eqref{P(G-e)} and proves the lemma.
\end{proof}

It is now easy to prove Birkhoff's fundamental result about the chromatic polynomial.
\begin{thm}[\cite{bir:dfn}]
Let $G=(V,E)$ be a graph with $\#V=n$.  Then $P(G;t)$ is a polynomial in $t$ of degree
$$
\deg P(G;t) = n.
$$
\end{thm}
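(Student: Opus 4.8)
The plan is to induct on the number of edges $\#E$, feeding the Deletion-Contraction Lemma in as the recursive engine. The one subtlety is that deletion-contraction alone only shows $P(G;t)$ is a polynomial of degree \emph{at most} $n$; to nail the degree down to exactly $n$, I would strengthen the statement and prove that $P(G;t)$ is in fact \emph{monic} of degree $n=\#V$. Carrying the leading coefficient through the recursion is precisely what guarantees the top-degree term cannot silently cancel.

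For the base case $\#E=0$, the graph $G$ is edgeless on $n$ vertices, so every function $\ka\colon V\ra[t]$ is vacuously proper and $P(G;t)=t^n$, which is monic of degree $n$. For the inductive step, assume $\#E\ge 1$ and that the strengthened claim holds for all graphs with fewer edges. Picking any $e\in E$, the graph $G\setm e$ has $n$ vertices and one fewer edge, while $G/e$ has $n-1$ vertices and strictly fewer edges (see Figure~\ref{DCfig}); so by induction $P(G\setm e;t)$ is monic of degree $n$ and $P(G/e;t)$ is monic of degree $n-1$. The Deletion-Contraction Lemma then gives $P(G;t)=P(G\setm e;t)-P(G/e;t)$, and subtracting a degree-$(n-1)$ polynomial from a monic degree-$n$ polynomial again produces a monic polynomial of degree $n$. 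This closes the induction and simultaneously yields both polynomiality and $\deg P(G;t)=n$.

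The step I expect to demand the most care is the bookkeeping for $G/e$: I must verify that contraction really does drop the vertex count to $n-1$ and strictly decreases the number of edges, so that the induction is well-founded, and that collapsing the parallel edges created by the contraction (for instance the edges $uv$ and $ux$ becoming a single $uv'$) leaves the count of proper colorings unchanged, since parallel edges enforce the same inequality as one edge. By contrast, the degree claim itself is then immediate, as the entire degree-$n$ term is inherited from the deletion term $P(G\setm e;t)$.
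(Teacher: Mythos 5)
Your proposal is correct and follows essentially the same route as the paper: induction on $\#E$, base case $P(G;t)=t^n$ for the edgeless graph, and the Deletion--Contraction Lemma for the inductive step. The only difference is that you carry monicity through the induction to prevent cancellation of the leading term, which is a harmless (and in fact unnecessary) strengthening, since subtracting a polynomial of degree $n-1$ from one of degree $n$ can never disturb the degree-$n$ coefficient.
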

\begin{proof}
We will induct on $m=\#E$.  If $m=0$ then $G$ is just a set of $n$ vertices.  Since there are no edges, each vertex can be colored independently in any of $t$ ways.  So in this case $P(G;t)=t^n$ which certainly satisfies the requirements of the theorem.

Now suppose that $m>0$.  So $E$ is nonempty and pick any $e\in E$.  By the Deletion-Contraction Lemma, $P(G)=P(G\setm e)-P(G/e)$.  By induction, $P(G\setm e)$ is a polynomial in $t$ of degree $n$ since $G\setm e$ and $G$ have the same number of vertices.  We also have that $P(G/e)$ is a polynomial in $t$.  But it has one fewer vertex than $G$ and so is of degree $n-1$.  The proof is completed by observing that the difference of a polynomial of degree $n$ and one of degree $n-1$ is a polynomial of degree $n$.
\end{proof}

Deletion-contraction is also a useful tool when it comes to computing chromatic polynomials.  Recall that we were not able to compute $P(C_4)$ for the $4$-cycle in the previous  section.  But after deleting and contracting  one of its edges $e$, the computation is reduced to graphs where we can apply the vertex-by-vertex technique used earlier.  Specifically,

\begin{align*}
\begin{tikzpicture}
\draw(-.5,.5) node{$P\left(\rule{0pt}{20pt}\right.$};
\draw(.5,1.3) node{$e$};
\fill(0,0) circle(.1);
\fill(1,0) circle(.1);
\fill(0,1) circle(.1);
\fill(1,1) circle(.1);
\draw (0,0)--(1,0)--(1,1)--(0,1)--(0,0);
\draw(1.5,.5) node{$\left.\rule{0pt}{20pt}\right)$};
\end{tikzpicture}
&\raisebox{20pt}{=}
\begin{tikzpicture}
\draw(-.5,.5) node{$P\left(\rule{0pt}{20pt}\right.$};
\fill(0,0) circle(.1);
\fill(1,0) circle(.1);
\fill(0,1) circle(.1);
\fill(1,1) circle(.1);
\draw (0,1)--(0,0)--(1,0)--(1,1);
\draw(1.5,.5) node{$\left.\rule{0pt}{20pt}\right)$};
\draw(2,.5) node{$-$};
\end{tikzpicture}
\begin{tikzpicture}
\draw(-.5,.5) node{$P\left(\rule{0pt}{20pt}\right.$};
\fill(0,0) circle(.1);
\fill(1,0) circle(.1);
\fill(.5,1) circle(.1);
\draw (0,0)--(1,0)--(.5,1)--(0,0);
\draw(1.5,.5) node{$\left.\rule{0pt}{20pt}\right)$};
\end{tikzpicture}
\\
&= t(t-1)^3 - t(t-1)(t-2)\\[5pt]
&=t(t-1)(t^2-3t+3).
\end{align*}
Note that, unlike the polynomial computed in Section~\ref{i}, this one has complex roots.

\bfi
\begin{tikzpicture}
\draw(1,-.8) node{$G$};
\draw (2,2)--(0,2)--(0,0)--(2,2)--(2,0);
\fill(0,0) circle(.1);
\fill(2,0) circle(.1);
\fill(0,2) circle(.1);
\fill(2,2) circle(.1);
\draw(1,2.3) node{$b$};
\draw(-.3,1) node{$c$};
\draw(1.2,.8) node{$d$};
\draw(2.3,1) node{$e$};
\end{tikzpicture}
\hs{20pt}
\begin{tikzpicture}
\draw(1,-.8) node{$C$};
\draw (2,2)--(0,2)--(0,0)--(2,2);
\fill(0,0) circle(.1);
\fill(0,2) circle(.1);
\fill(2,2) circle(.1);
\draw(1,2.3) node{$b$};
\draw(-.3,1) node{$c$};
\draw(1.2,.8) node{$d$};
\end{tikzpicture}
\hs{20pt}
\begin{tikzpicture}
\draw(1,-.8) node{$B$};
\draw (0,2)--(0,0)--(2,2);
\fill(0,0) circle(.1);
\fill(0,2) circle(.1);
\fill(2,2) circle(.1);
\draw(-.3,1) node{$c$};
\draw(1.2,.8) node{$d$};
\end{tikzpicture}
\capt{A broken circuit \label{BC}}
\efi

Since $P(G;t)$ is a polynomial, one would like a description of its coefficients.  This was done by Hassler Whitney~\cite{whi:lem}.    Fix a total ordering $e_1<e_2<\ldots<e_m$ of the edge set $E$.  A {\em broken circuit} of $G$ is a subset  $B\sbs E$ obtained by removing the smallest edge from the edge set of some cycle of $G$.  Consider our usual graph $G$ with edges labeled as in Figure~\ref{BC} and ordered by $b<c<d<e$.
Then $G$ has a unique cycle $C$ with edges $\{b,c,d\}$.
The corresponding broken circuit is $\{c,d\}$.  Call  $A\sbe E$ an {\em NBC set} (short for {\em no broken circuit set}) if $A$ does not contain any broken circuit of $G$.  In our example, these are exactly the edge sets not containing $\{c,d\}$.  Let
$$
\nbc_k(G) =\#\{A\sbe E \mid \text{$A$ is an NBC set with $k$ edges}\}.
$$
Making a chart of these numbers for our example graph gives
$$
\begin{array}{c|c|c}
k   & \text{NBC sets with $k$ edges} & \nbc_k(G)\\
\hline \hline
 0    &  \emp                                           & 1 \rule{0pt}{15pt}\\[5pt]
 1    & \{b\},\ \{c\},\ \{d\},\ \{e\}                   & 4\\[5pt]
 2    & \{b,c\},\ \{b,d\},\ \{b,e\},\ \{c,e\},\ \{d,e\} & 5 \\[5pt]
 3    & \{b,c,e\},\ \{b,d,e\}                           & 2 \\[5pt]
 4    & \text{none}                                     & 0
\end{array}
$$
Comparing the last column to the coefficients of $P(G;t)$ as calculated in~\eqref{P(G)ex}, the reader should have a conjecture in mind.
\begin{thm}[\cite{whi:lem}]
\label{whi}
For any graph $G=(V,E)$ with $\#V=n$ and any total order on $E$ we have 

\vs{8pt}

\eqed{P(G;t)=\sum_{k=0}^n (-1)^k \nbc_k(G)\ t^{n-k}.}.
\end{thm}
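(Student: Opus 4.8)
The plan is to prove the identity by induction on $m=\#E$, using the Deletion-Contraction Lemma together with the degree result of the previous theorem, exactly as those tools were used to establish polynomiality. Write $N(G)=\sum_A (-1)^{|A|}\,t^{n-|A|}$, the sum ranging over all NBC sets $A\sbe E$ for the fixed order; the goal is $N(G)=P(G;t)$. For the base case $m=0$ there are no circuits, hence no broken circuits, so the only NBC set is $\emp$ and $N(G)=t^n=P(G;t)$, matching the edgeless computation. For the inductive step I would single out the \emph{largest} edge $e=e_m$ and split the NBC sets of $G$ into those that avoid $e_m$ and those that contain $e_m$.

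The first family is handled by a clean observation about the ordering: any cycle of $G$ that uses $e_m$ has $e_m$ as its largest edge, so $e_m$ is never the edge removed and therefore survives in the associated broken circuit. Consequently a broken circuit of $G$ lying inside $E\setm\{e_m\}$ is exactly a broken circuit of $G\setm e_m$, and a set $A\sbe E\setm\{e_m\}$ is NBC in $G$ if and only if it is NBC in $G\setm e_m$. Since $G\setm e_m$ has the same $n$ vertices, these terms contribute precisely $N(G\setm e_m)$, which equals $P(G\setm e_m)$ by induction.

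For the second family I would write each such set as $A'\cup\{e_m\}$ and factor out one sign and one power of $t$, reducing its contribution to $-\sum_{A'}(-1)^{|A'|}\,t^{(n-1)-|A'|}$, the sum being over those $A'\sbe E\setm\{e_m\}$ for which $A'\cup\{e_m\}$ is NBC in $G$. The heart of the proof is to match this against $N(G/e_m)$, using the contraction $G/e_m$ (which has $n-1$ vertices) with its edges ordered by restriction and with parallel edges merged to the smaller position; the map sends $A'$ to its image in $G/e_m$. I expect the main obstacle to be verifying that this map is a cardinality-preserving bijection onto the NBC sets of $G/e_m$. Two points need care: (i) no two edges of a valid $A'$ can merge, since two edges $uv,ux$ merging would, together with $e_m=vx$, form a triangle whose broken circuit already lies inside $A'\cup\{e_m\}$, contradicting that it is NBC; and (ii) the correspondence between cycles of $G$ through $e_m$ and cycles of $G/e_m$ through the contracted vertex must be shown to carry broken circuits to broken circuits in both directions. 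This last bookkeeping is the delicate part precisely because of the merged parallel edges and the role played by the edge order.

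Granting the bijection, the second family contributes $-N(G/e_m)=-P(G/e_m)$ by induction, so $N(G)=P(G\setm e_m)-P(G/e_m)=P(G;t)$ by the Deletion-Contraction Lemma, completing the induction. As an alternative route that sidesteps the contraction bookkeeping, one could first derive the subgraph expansion $P(G;t)=\sum_{A\sbe E}(-1)^{|A|}\,t^{c(A)}$, where $c(A)$ is the number of components of the spanning subgraph $(V,A)$, by expanding $P(G;t)=\sum_\ka\prod_{uv\in E}\bigl(1-[\ka(u)=\ka(v)]\bigr)$; one then notes that every NBC set is a forest, so $t^{c(A)}=t^{n-|A|}$ on NBC sets, and cancels all sets containing a broken circuit by a sign-reversing, component-count-preserving involution that toggles a canonically chosen edge. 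There the obstacle shifts to selecting that edge so that the toggle is genuinely an involution.
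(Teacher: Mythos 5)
The paper states this theorem with only a citation to Whitney and no proof (the box is placed inside the statement), so there is no internal argument to compare yours against; both routes you describe are classical. In your primary route, the parts you actually carry out are correct. The first family is handled completely: because $e_m$ is the largest edge of $E$, it is never the minimum of a cycle containing it and hence never the removed edge, so the broken circuits of $G$ avoiding $e_m$ are precisely the broken circuits of $G\setm e_m$ under the restricted order, and the NBC sets of $G$ omitting $e_m$ are exactly those of $G\setm e_m$. Your point (i) is also right: if $uv,ux\in A'$ and $e_m=vx$, the triangle $\{uv,ux,vx\}$ has $vx$ as its largest edge, so its broken circuit is $\{\max(uv,ux),\,vx\}\sbe A'\cup\{e_m\}$. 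Giving a merged edge of $G/e_m$ the position of its smaller preimage is indeed the correct convention.

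The proposal stops, however, exactly where the content of the theorem lies: the claim that $A'\mapsto\phi(A')$ is a bijection from $\{A' \mid A'\cup\{e_m\}\ \text{NBC in}\ G\}$ onto the NBC sets of $G/e_m$ is asserted, not proved, and it needs both directions of a correspondence between broken circuits of $G$ (including those \emph{not} through $e_m$) and broken circuits of $G/e_m$. Two specific items are missing. First, injectivity of $\phi$ across different sets $A'$: two distinct valid sets could a priori have the same image by using the two different representatives $uv$ and $ux$ of one parallel class; this is rescued by noting that the \emph{larger} representative can never occur in any valid $A'$, since $\{\max(uv,ux),e_m\}$ is already a broken circuit of $G$ --- a point your (i) does not cover. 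Second, the two-way transfer of the NBC condition, e.g.\ that every broken circuit of $G/e_m$ pulls back (adjoining $e_m$ when the underlying cycle enters the contracted vertex through $v$ and leaves through $x$) to a set forcing a broken circuit of $G$, and conversely. None of this fails --- the decomposition of the NBC sets with respect to the last edge into those of the deletion together with a cone over those of the contraction is a standard fact --- but as written you have a correct plan rather than a proof. The same remark applies to your alternative route: the subgraph expansion is the easy half, and the whole difficulty is the canonical choice of edge that makes the toggle a genuine involution (this is essentially Whitney's original argument; a bijective variant is the content of the Blass--Sagan paper cited in Section~\ref{ta}).
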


This theorem is remarkable since it implies that the numbers $\nbc_k(G)$ do not depend on the ordering given to the edge set, even though the actual NBC sets may be different.  It also makes calculating certain coefficients of $P(G;t)$ very easy.  For example $\nbc_0(G)=1$ because of the empty edge set.  So $P(G;t)$ is monic.  Furthermore, since a cycle has at least three edges, any broken circuit has at least two.  It follows that all single edges are NBC and thus the coefficient of $t^{n-1}$ is $-|E|$.

%%%%%%%%%%%%%%%%%%%%%%%%%%%%%%%%%%%%%%%%%%%%%%

\section{Three applications}
\label{ta}

We will now look at three theorems where the chromatic polynomial makes a surprising appearance.  These are results of Stanley~\cite{sta:aog} on acyclic orientations, Zaslavsky~\cite{zas:fua} on hyperplane arrangements, and Hallam and Sagan~\cite{hs:fcp} on increasing forests (later improved upon by Hallam, Martin and Sagan~\cite{hms:isf}).

\bfi
\begin{tikzpicture}
\draw(1,-.8) node{$O$};
\draw[->] (0,2)-- (1,2);
\draw[->] (2,2)-- (1,1);
\draw[->] (0,0)-- (0,1);
\draw[->] (2,0)-- (2,1);
\draw (1,2)--(2,2) (1,1)--(0,0) (0,1)--(0,2) (2,1)--(2,2);
\fill(0,0) circle(.1);
\draw(-.5,0) node{$x$};
\fill(2,0) circle(.1);
\draw(2.5,0) node{$w$};
\fill(0,2) circle(.1);
\draw(-.5,2) node{$u$};
\fill(2,2) circle(.1);
\draw(2.5,2) node{$v$};
\end{tikzpicture}
\hs{40pt}
\begin{tikzpicture}
\draw(1,-.8) node{$O'$};
\draw[->] (0,2)-- (1,2);
\draw[->] (0,0)-- (1,1);
\draw[->] (0,0)-- (0,1);
\draw[->] (2,2)-- (2,1);
\draw (1,2)--(2,2) (1,1)--(2,2) (0,1)--(0,2) (2,1)--(2,0);
\fill(0,0) circle(.1);
\draw(-.5,0) node{$x$};
\fill(2,0) circle(.1);
\draw(2.5,0) node{$w$};
\fill(0,2) circle(.1);
\draw(-.5,2) node{$u$};
\fill(2,2) circle(.1);
\draw(2.5,2) node{$v$};
\end{tikzpicture}
\capt{Two orientations \label{O}}
\efi

A {\em digraph} or {\em directed graph} is $D=(V,A)$ consisting of a set of vertices $V$ and a set of arcs $A$ where each arc goes from one vertex to another.  If arc $a$ goes from vertex $u$ to vertex $v$ then we write $a=\vec{uv}$.  For example, the arc set for the digraph $O$ in Figure~\ref{O} is 
$A=\{\vec{uv},\ \vec{vx},\ \vec{xu},\ \vec{wv}\}$.
A {\em directed cycle} $v_1,v_2,\ldots,v_n$ in a digraph is defined analogously to a cycle in a graph where one insists that there is an arc from $v_i$ to $v_{i+1}$ for all $i$ modulo $n$.  A digraph without cycles is said to be {\em acyclic}.  In Figure~\ref{O}, the digraph $O$ has a cycle $u,v,x$ while $O'$ is acyclic.

Given a graph $G=(V,E)$, an {\em orientation} of $G$ is a digraph obtained by replacing each edge $uv$ by one of its two possible orientations $\vec{uv}$ or $\vec{vu}$.  The two digraphs in Figure~\ref{O} are both orientations of our standard example graph $G$ in Figure~\ref{G}.  Clearly the number of orientations of $G$ is $2^{\#E}$.  But what if we require the orientations to be acyclic?  Let 
$$
\cO(G) = \{ O \mid \text{$O$ is an acyclic orientation of $G$}\}.
$$
Returning to our standard example, there are $2^3$ orientations of the (undirected) cycle $u,v,x$.  Of these, two of them create a directed cycle, one going clockwise and the other counterclockwise.  So there are $2^3-2=6$ acyclic orientations of this part of $G$.  As for the edge $vw$, it can be oriented either way without producing  a cycle.  So for this graph $\#\cO(G)=6\cdot 2 = 12$.  We will now do something completely crazy.  Let's plug  $t=-1$ into the chromatic polynomial of $G$ as  computed in~\eqref{P(G)ex}.  This gives
$$
P(G;-1)= (-1)^4 - 4(-1)^3 + 5(-1)^2 - 2(-1) = 12.
$$
This is the same $12$ as the previous one.
\begin{thm}[\cite{sta:aog}]
\label{sta}
For any graph $G=(V,E)$ with $\#V=n$ we have

\vs{8pt}

\eqed{
P(G;-1) = (-1)^n \#\cO(G).
}
\end{thm}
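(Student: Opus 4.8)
The plan is to prove the statement by induction on $\#E$, mirroring exactly the deletion-contraction argument already used for Birkhoff's theorem. The reason to expect this to succeed is that both sides of the identity satisfy a deletion-contraction recursion, and the factor $(-1)^n$ is precisely what converts the \emph{minus} sign in $P(G)=P(G\setm e)-P(G/e)$ into the \emph{plus} sign that acyclic orientations will turn out to obey.

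First I would dispose of the base case $\#E=0$. Here $G$ consists of $n$ isolated vertices, so $P(G;t)=t^n$ and hence $P(G;-1)=(-1)^n$, while an edgeless graph has exactly one orientation and it is vacuously acyclic, giving $\#\cO(G)=1$. Thus $P(G;-1)=(-1)^n\#\cO(G)$ holds. For the inductive step I pick any $e\in E$ and evaluate the Deletion-Contraction Lemma at $t=-1$:
$$
P(G;-1)=P(G\setm e;-1)-P(G/e;-1).
$$
Since $G\setm e$ has $n$ vertices and $G/e$ has $n-1$ vertices, the inductive hypothesis gives $P(G\setm e;-1)=(-1)^n\#\cO(G\setm e)$ and $P(G/e;-1)=(-1)^{n-1}\#\cO(G/e)$. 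Substituting, and noting that $-(-1)^{n-1}=(-1)^n$, I may factor out $(-1)^n$ to obtain
$$
P(G;-1)=(-1)^n\bigl(\#\cO(G\setm e)+\#\cO(G/e)\bigr).
$$

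Everything therefore reduces to the acyclic-orientation recursion
$$
\#\cO(G)=\#\cO(G\setm e)+\#\cO(G/e),
$$
and this is the step I expect to be the main obstacle, as it is the only genuinely new combinatorial input. I would prove it by studying the forgetful map $\cO(G)\to\cO(G\setm e)$ that discards the orientation of $e=uv$. The crux is a two-part claim about an acyclic orientation $O$ of $G\setm e$: first, at least one of the two orientations of $e$ extends $O$ to an acyclic orientation of $G$, for if both created directed cycles there would already be directed paths from $u$ to $v$ and from $v$ to $u$ in $O$, hence a directed cycle in $O$ itself; second, both orientations of $e$ work precisely when $O$ contains no directed path from $u$ to $v$ and none from $v$ to $u$. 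Hence the forgetful map is two-to-one over exactly those $O$ that extend in both ways and one-to-one elsewhere, so $\#\cO(G)=\#\cO(G\setm e)+N$, where $N$ is the number of orientations extendable both ways.

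The final task is to identify $N$ with $\#\cO(G/e)$ by a bijection: pulling an acyclic orientation of $G/e$ back along the contraction — each arc at the merged vertex $v'$ becoming the correspondingly directed arc at $u$ or at $v$ — yields an acyclic orientation of $G\setm e$ admitting no directed path between $u$ and $v$ in either direction, and this correspondence is reversible. I would verify that this pullback is well defined even when $u$ and $v$ share a neighbor: the convention of merging the resulting parallel edges into a single edge, already adopted for $P(G)$, is exactly what forces the two edges at $v'$-versus-the-common-neighbor to pull back consistently, so that the counts on the two sides agree. Checking that acyclicity is preserved in both directions then completes the recursion, and with it the induction.
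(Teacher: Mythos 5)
Your proof is correct, but it follows a different route from anything in the paper: the paper states this theorem without proof, citing Stanley, and only gestures at two other derivations --- one via Whitney's NBC expansion, which reduces the claim to a bijection between NBC sets and acyclic orientations (Blass--Sagan), and one as the $t=1$ case of the stronger compatible-pairs result, Theorem~\ref{pair}. Your deletion-contraction induction is the classical self-contained argument, and its one genuinely new ingredient, the recursion $\#\cO(G)=\#\cO(G\setm e)+\#\cO(G/e)$, is handled properly: the forgetful map is surjective because two bad extensions of $e=uv$ would force directed paths both from $u$ to $v$ and from $v$ to $u$, hence a closed directed walk and so a directed cycle in an acyclic orientation; and you correctly flag the only delicate point, namely that when $u$ and $v$ share a neighbor $w$ the no-directed-path condition is exactly what rules out patterns like $\vec{uw},\vec{wv}$ and thus makes the contraction of the orientation well defined on the merged edge $v'w$. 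What your approach buys is economy --- it uses only the Deletion-Contraction Lemma already proved in Section~\ref{wp} and needs no M\"obius-function or broken-circuit machinery; what the paper's suggested routes buy is more refined information, namely an explicit bijection with NBC sets or the full evaluation of $P(G;-t)$ at every positive $t$.
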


It is not at all clear what it means to color a graph with $-1$ colors.  However, we can make some combinatorial sense of this result.  By Theorem~\ref{whi} we have
$$
P(G;-1)=\sum_{k=0}^n (-1)^k \nbc_k(G)\ (-1)^{n-k} 
=(-1)^n \sum_{k=0}^n \nbc_k(G).
$$
So one could give a combinatorial proof of Theorem~\ref{sta} by constructing a bijection between the NBC sets of $G$ and its acyclic orientations as was done by Blass and Sagan~\cite{bs:bpt}.

\bfi
\begin{tikzpicture}
\draw(-1,-2)--(1,2) (-2,2)--(2,-2);
\draw(1.3,1) node{$y=2x$};
\draw(-2,1) node{$y=-x$};
\end{tikzpicture}
\capt{A hyperplane arrangement in $\bbR^2$ \label{cH}}
\efi

We now turn to hyperplane arrangements.  Let $\bbR$ be the real numbers.  A {\em hyperplane} $H$ in $\bbR^n$ is a subspace of dimension $n-1$.  Note that, as a subspace, a hyperplane must go through the origin.  A {\em hyperplane arrangement} is just a finite set of hyperplanes $\cH=\{H_1,H_2,\ldots,H_k\}$.  For example, in $\bbR^2$ the hyperplanes are just lines through $(0,0)$ and the arrangement $\cH=\{y=2x,\ y=-x$\} is shown in Figure~\ref{cH} (without the coordinate axes for clarity in what comes later).  The {\em regions} of an arrangement $\cH$ are the connected components that remain after one removes the hyperplanes of the arrangement from $\bbR^n$.  Let $R(\cH)$ be the set of regions of $\cH$.  So in Figure~\ref{cH}, $R(\cH)$ consists of four regions.  Indeed, any arrangement of $k$ hyperplanes in $\bbR^2$ has $2k$ regions, but things get more complicated in $\bbR^n$.

At first blush, these concepts seem to have nothing to do with the chromatic polynomial.  But wait!  Suppose $G=(V,E)$ is a graph with $V=[n]$.  Note that we are now using an interval of integers for the labels of the vertices, not for the colors of a coloring.  Write 
$$
\bbR^n=\{(x_1,x_2,\ldots,x_n) \mid 
\text{$x_i\in\bbR$ for all $i\in[n]$}\}.
$$
We can now associate with $G$ an arrangement of hyperplanes in $\bbR^n$ defined by
$$
\cH(G) = \{ x_i=x_j \mid ij\in E\}.
$$
So each edge of $G$ gives rise to a hyperplane gotten by setting the coordinate functions of its endpoints equal.  As an example,
suppose $G$ has $V=[3]$ and $E=\{12,23\}$.  Then the corresponding arrangement would be 
$\cH(G)=\{x_1=x_2,\ x_2=x_3\}$ in $\bbR^3$.  Notice that the number of regions of $\cH(G)$ is $4$ in this case.  It is also easy to see that $P(G;t)=t(t-1)^2$ so that $P(G;-1)=-4$.  Again, this is not an accident.
\begin{thm}[\cite{zas:fua}]
\label{zas}
For any graph $G=(V,E)$ with $V=[n]$ we have

\vs{8pt}

\eqed{
P(G;-1) = (-1)^n \cdot \#R(\cH(G)).
}
\end{thm}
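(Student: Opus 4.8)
The plan is to deduce this from Stanley's theorem (Theorem~\ref{sta}), which already supplies the identity $P(G;-1)=(-1)^n\#\cO(G)$. Thus it suffices to produce a bijection between the regions of $\cH(G)$ and the acyclic orientations of $G$, that is, to show $\#R(\cH(G))=\#\cO(G)$. The natural candidate map $\phi\colon R(\cH(G))\to\cO(G)$ records, for each region $R$ and each edge $ij\in E$, the sign of $x_i-x_j$ on $R$: since $R$ is connected and disjoint from the hyperplane $x_i=x_j$, this sign is constant on $R$, and I orient the edge as $\vec{ij}$ precisely when $x_i<x_j$ throughout $R$.

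First I would check that $\phi(R)$ is always acyclic. If $\phi(R)$ contained a directed cycle $v_1,v_2,\dots,v_k,v_1$, then on $R$ we would have $x_{v_1}<x_{v_2}<\dots<x_{v_k}<x_{v_1}$, an impossibility; so $\phi$ indeed lands in $\cO(G)$.

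Next I would prove $\phi$ is a bijection by analyzing, for each orientation $O$, the set $C_O=\{x\in\bbR^n : x_i<x_j \text{ whenever } \vec{ij}\in O\}$. This set is an intersection of open half-spaces, hence convex and therefore connected, and it is disjoint from every hyperplane of $\cH(G)$; consequently it lies inside a single region, and in fact any region $R$ with $\phi(R)=O$ satisfies $R\sbe C_O$. For injectivity: if $\phi(R)=\phi(R')=O$ then $R\sbe C_O$ and $R'\sbe C_O$, and since $C_O$ is connected and arrangement-free it sits in one region, forcing $R=R'$. For surjectivity I must show $C_O\neq\emp$ when $O$ is acyclic; here acyclicity is exactly what is needed. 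Choosing a topological ordering of the vertices compatible with $O$ (so every arc points forward) and assigning strictly increasing coordinate values along this order yields a point of $C_O$, whence $O=\phi(R)$ for the region $R$ containing $C_O$. Combining the resulting bijection with Theorem~\ref{sta} gives $P(G;-1)=(-1)^n\#\cO(G)=(-1)^n\#R(\cH(G))$.

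The main obstacle is the bijection step, and within it the verification that distinct acyclic orientations correspond to distinct regions and that each acyclic orientation is actually realized. Both hinge on the convexity of $C_O$, which yields connectivity and hence containment in a single region, together with the observation that acyclicity is precisely the condition guaranteeing $C_O\neq\emp$ via a topological sort. The empty set would arise exactly when $O$ has a directed cycle, which is why the correspondence is with acyclic orientations rather than with all $2^{\#E}$ orientations.
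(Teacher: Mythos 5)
Your argument is correct and follows exactly the route the paper itself indicates: the theorem is stated without formal proof, but the surrounding discussion points to precisely this bijection between regions of $\cH(G)$ and acyclic orientations (via the half-spaces $x_i<x_j$ corresponding to arcs $\vec{ij}$), combined with Theorem~\ref{sta}. Your write-up simply fills in the details of that sketch---the constancy of signs on regions, the convexity of $C_O$, and the topological-sort construction showing $C_O\neq\emp$ for acyclic $O$---all of which are sound.
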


Seeing the two previous theorems back-to-back, the reader may suspect that they are related.  In fact there is a bijection between acyclic orientations  of $G$ with vertices labeled by $[n]$ and regions of its hyperplane arrangement.  Every hyperplane $x_i=x_j$ determines two half-spaces, namely $x_i<x_j$ and $x_i>x_j$.  Consider these as corresponding to the arcs $\vec{ij}$ and $\vec{ji}$, respectively.  One can then show that an orientation $O$ of $G$ is acyclic if and only if the intersection of the associated half-spaces is a region of $\cH(G)$.

For our third application, we will need a few more definitions from graph theory.  A {\em subgraph} of $G=(V,E)$ is a graph $G'=(V',E')$ with  $V'\sbe V$ and $E'\sbe E$.  We say $G'$  is {\em spanning} if $V'=V$.  
In this case, we often identify $G$ with its edge set since the set of vertices is fixed.
In Figure~\ref{F} we see our usual example graph with the vertices relabeled by $[4]$, as well as two spanning subgraphs $F=\{12,24\}$ and $F'=\{14,24\}$.  A {\em  path from $u$ to $v$} in $G$ is a sequence of distinct vertices $P:u=v_1,v_2,\ldots,v_n=v$ where $v_i v_{i+1} \in E$ for $i\in[n-1]$. Returning to Figure~\ref{F} we see that $P:4,1,2,3$ is a path from $4$ to $3$ in $G$.  Call $G$ {\em connected} if for every pair of vertices $u,v$ there is a path from $u$ to $v$.  More generally, the {\em components} of $G$ are the connected subgraphs  which are maximal with respect to inclusion.  
So a connected graph has one component.
The graph $G$ in Figure~\ref{F} is connected, but the subgraphs $F$ and $F'$ are not.  Both of these subgraphs have two components.

\bfi
\begin{tikzpicture}
\draw(1,-.8) node{$G$};
\draw (2,2)--(0,2)--(0,0)--(2,2)--(2,0);
\fill(0,0) circle(.1);
\draw(-.5,0) node{$4$};
\fill(2,0) circle(.1);
\draw(2.5,0) node{$3$};
\fill(0,2) circle(.1);
\draw(-.5,2) node{$1$};
\fill(2,2) circle(.1);
\draw(2.5,2) node{$2$};
\end{tikzpicture}
\hs{20pt}
\begin{tikzpicture}
\draw(1,-.8) node{$F$};
\draw (0,0)--(2,2)--(0,2);
\fill(0,0) circle(.1);
\draw(-.5,0) node{$4$};
\fill(2,0) circle(.1);
\draw(2.5,0) node{$3$};
\fill(0,2) circle(.1);
\draw(-.5,2) node{$1$};
\fill(2,2) circle(.1);
\draw(2.5,2) node{$2$};
\end{tikzpicture}
\hs{20pt}
\begin{tikzpicture}
\draw(1,-.8) node{$F'$};
\draw (0,2)--(0,0)--(2,2);
\fill(0,0) circle(.1);
\draw(-.5,0) node{$4$};
\fill(2,0) circle(.1);
\draw(2.5,0) node{$3$};
\fill(0,2) circle(.1);
\draw(-.5,2) node{$1$};
\fill(2,2) circle(.1);
\draw(2.5,2) node{$2$};
\end{tikzpicture}
\capt{A graph $G$ and two spanning forests $F,F'$ \label{F}}
\efi

A graph $F$ is a {\em forest} if it contains no (undirected) cycles.  The components of $F$ are called {\em trees}.  Trees can be characterized by the fact that for any pair of vertices there is a unique path between them,  So $F$ and $F'$ in Figure~\ref{F} are spanning forests of $G$.  Let $F$ be a forest with  $V=[n]$ so that we can compare the size of the vertex labels.
Say that $F$ is {\em increasing} if the vertex labels of any path starting at the minimum vertex in its component tree form an increasing sequence.  
So the forest $F$ in Figure~\ref{F} is increasing.  Indeed, in the tree with one vertex, there is only the path $3$ which is trivially increasing.   Note that a similar argument shows that any tree with only one or two vertices satisfies the increasing condition.  As far as the tree with three vertices in $F$, all paths from the minimum vertex $1$ are subpaths of $1,2,4$.  And this is an increasing sequence.  On the other hand, the forest $F'$ is not increasing since $1,4,2$ is a path starting at $1$ which is not an increasing sequence.  

Given $G$ with vertices $[n]$, consider the integers
$$
\isf_k(G) = \#\{ F \mid \text{$F$ is an increasing spanning forest of $G$ with $k$ edges}\}
$$
with generating function
$$
\ISF(G)=\ISF(G;t) = \sum_{k=0}^n (-1)^k\isf_k(G)\ t^{n-k},
$$
Note that, although our notation doesn't show it, $\isf_k(G)$ depends on how the vertices of $G$ are labeled.  Also, it is not clear why we have introduced the signs in $\ISF(G)$ or why we made $\isf_k(G)$ the coefficient of $t^{n-k}$ rather than $t^k$.  But this will become obvious shortly.  Let us compute the generating function for $G$ as in Figure~\ref{F}.  First of all $\isf_0(G)=1$ because the spanning graph with no edges has only single vertex trees which are all increasing.  Next $\isf_1(G)=\#E=4$ since any single edge is increasing.  There are $\binom{4}{2}=6$ ways to choose a spanning forest with two edges, of which only the $F'$ in Figure~\ref{F} is not increasing.  So 
$\isf_2(G)=6-1=5$.  Similarly, one computes that $\isf_3(G)=2$.  Finally, $\isf_4(G)=0$ since the only spanning subgraph of $G$ with $4$ edges is $G$ itself which is not even a forest.  Putting everything together we obtain
$$
\ISF(G;t) = t^4 - 4 t^3 + 5 t^2 - 2 t,
$$
a polynomial which we have seen previously!

But before we explore the connection between $\ISF(G;t)$ and $P(G;t)$, we wish to mention a nice factorization of the former.  As usual, suppose $G=(V,E)$ has $V=[n]$ and define the following edges sets
$$
E_j=E_j(G) = \{ij\in E \mid i<j\}
$$
for $j\in[n]$.  Note that we always have $E_1=\emp$ since there are no vertices with label smaller than $1$.  Also the $E_j$ partition $E$ in that $E=\uplus_{j\in[n]} E_j$.  In our usual example
$$
E_1 = \emp,\ E_2 =\{12\},\ E_3=\{23\},\ E_4=\{14,24\}.
$$
These sets give rise to the  polynomial
$$
\prod_{j=1}^4 (t-\#E_j) = (t-0)(t-1)(t-1)(t-2) = t^4 - 4 t^3 + 5 t^2 - 2 t
$$
which by now should be very familiar!
This is explained by the next result.
\begin{thm}[\cite{hs:fcp}]
\label{Ej}
For any graph  $G=(V,E)$ with $V=[n]$ we have the following.
\begin{enumerate}
    \item[(a)]  Subgraph $F$ of $G$ is an increasing spanning forest if and only if $|F\cap E_j|\le 1$ for all $j\in[n]$.
    \item[(b)]  We have
    
    \vs{4pt}
    
    \eqed{\ISF(G;t) = \prod_{j=1}^n (t-\#E_j).}
\end{enumerate}
\end{thm}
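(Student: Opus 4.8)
The plan is to prove part (a) first, since part (b) then follows almost at once from a generating-function factorization: once (a) identifies the increasing spanning forests as exactly the edge sets meeting each $E_j$ in at most one edge, the product structure is forced by the fact that the $E_j$ partition $E$. Throughout I regard $F$ as a spanning subgraph, i.e.\ an edge set $F\sbe E$ on the fixed vertex set $[n]$, and I record the basic reformulation that, since $E=\biguplus_{j\in[n]}E_j$, the quantity $|F\cap E_j|$ counts precisely the neighbors of $j$ in $F$ whose label is smaller than $j$.

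For the forward direction of (a), suppose $F$ is an increasing spanning forest and fix a component tree $T$ rooted at its minimum vertex $r$. Every vertex $j\neq r$ has a unique parent, and because the root-to-$j$ path is increasing the parent is smaller than $j$ while every child of $j$ is larger; as all neighbors of $j$ in $F$ lie in $T$, the parent is the only smaller neighbor, and $r$ has none. Hence $|F\cap E_j|\le 1$ for every $j$. For the converse I first check that $F$ is a forest: a cycle would have a maximum vertex $v$ with two cycle-neighbors both smaller than $v$, forcing $|F\cap E_v|\ge 2$.

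The remaining step, which I expect to be the main obstacle, is upgrading ``forest'' to ``increasing forest.'' Here I would run a counting argument on each tree $T$ with minimum vertex $r$: summing $|F\cap E_j|$ over $j\in V(T)$ counts each edge of $T$ exactly once (namely at its larger endpoint), so the sum equals $|V(T)|-1$; since the root contributes $0$ and the remaining $|V(T)|-1$ vertices each contribute at most $1$, every non-root vertex contributes exactly $1$, i.e.\ has a unique smaller neighbor $p(j)$. Iterating gives a strictly decreasing sequence $j>p(j)>p^2(j)>\cdots$ which can terminate only at $r$, and this decreasing walk is the unique path from $j$ to $r$ in the tree. Reading it backward, every path out of $r$ is increasing, so $T$, and hence $F$, is increasing.

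With (a) established, part (b) is routine bookkeeping. By (a) an increasing spanning forest is the same datum as an independent choice, for each $j\in[n]$, of a subset $A_j\sbe E_j$ with $|A_j|\le 1$, where $F=\biguplus_j A_j$; then $|F|=\sum_j|A_j|$ and $n-|F|=\sum_j(1-|A_j|)$. Writing $\ISF(G;t)=\sum_F(-1)^{|F|}t^{n-|F|}$ and distributing one factor of $t$ to each vertex, the sum factors across $j$ as
$$
\ISF(G;t)=\prod_{j=1}^n\Bigl(\sum_{A_j\sbe E_j,\ |A_j|\le 1}(-1)^{|A_j|}\,t^{\,1-|A_j|}\Bigr)=\prod_{j=1}^n\bigl(t-\#E_j\bigr),
$$
since the inner sum contributes $t$ from the empty choice and $-\#E_j$ from the $\#E_j$ single-edge choices. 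All the content lies in the converse of (a); the rest is formal.
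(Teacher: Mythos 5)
Your proof is correct. Note that the paper itself gives no proof of part (a) (it is quoted from the cited work of Hallam and Sagan) and only remarks that (b) follows from (a) by expanding the product; your derivation of (b) is exactly that remark made precise, and your argument for (a) --- the parent/child observation for the forward direction, and for the converse the cycle-maximum trick plus the double count $\sum_{j\in V(T)}|F\cap E_j|=|V(T)|-1$ forcing each non-root vertex to have a unique smaller neighbor --- is complete and sound.
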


Note that part (b) of this theorem follows directly from part (a).  For expanding the product shows that the coefficient of $t^{n-k}$ is (up to sign) the number of way of choosing $k$ edges of $G$ with no two coming from the same $E_j$.

There are at least two reasons why one can not always have $\ISF(G;t)=P(G;t)$.  For one thing, the former depends on which labels are given to the vertices while the latter does not.  And we have seen that $P(C_4)$ has complex roots, while the previous result shows that the roots of $\ISF(G)$ are always nonnegative integers.  So the question becomes when are the two polynomials equal?  The answer has to do with the notion of a perfect elimination ordering.  

Given $G=(V,E)$ then the graph {\em induced} by $W\sbe V$ is
$$
G[W] = G\setm W'
$$
where $W'$ is the complement of $W$ in $V$ and deletion of multiple edges is defined just as it was for a single edge.  Another description of $G[W]$ is that it is the subgraph of $G$ with vertex set $W$ and all possible edges of $G$ whose endpoints are in $W$.  For example, in Figure~\ref{F} the graph $G[1,2,4]$ is a $3$-cycle while $G[2,3,4]$ is the path $3,2,4$.

We say that $G=(V,E)$ has a {\em perfect elimination ordering} if there is an ordering of $V$ as $v_1,v_2,\ldots,v_n$ such that for all $j\in[n]$ the induced subgraph $G[V_j]$ is complete where
$$
V_j =\{v_i \mid \text{$i<j$ and $v_i v_j\in E$}\}.
$$
This definition may seem strange to those seeing it for the first time.  But it is a useful concept, for example, as a characterization of chordal graphs.  Suppose the vertices of the graph $G$ in Figure~\ref{F} are ordered in the natural way as $1,2,3,4$.  Then
the corresponding $V_j$ are just the vertices smaller than $j$ in the edges of $E_j$ which gives
$$
V_1 = \emp,\ V_2 =\{1\},\ V_3=\{2\},\ V_4=\{1,2\}.
$$
Clearly the graphs $G[V_j]$ for $j\le 3$ are complete since $G[\emp]$ is the empty graph, and $G[v]$ is just  $v$ for any single vertex $v$.  Finally, $G[V_4]$ is the edge $12$ which is also a complete graph.  So we have a perfect elimination ordering which presages the next theorem.
\begin{thm}[\cite{hs:fcp}]
\label{peo}
Let $G$ be a graph with $V=[n]$.  We have $P(G;t)=\ISF(G;t)$ if and only if the natural order on $[n]$ is a perfect elimination ordering of $G$.\hqed
\end{thm}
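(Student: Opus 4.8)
The plan is to invoke Theorem~\ref{Ej}(b) to rewrite the desired identity in the product form $P(G;t)=\prod_{j=1}^n(t-\#E_j)$, and then to prove \emph{simultaneously}, by induction on $n$, the inequality $P(G;t)\ge\prod_{j=1}^n(t-\#E_j)$ for all integers $t\ge n$ together with the assertion that equality holds (for all such $t$, equivalently as polynomials) exactly when the natural order is a perfect elimination ordering. Bundling the inequality into the induction is what makes both directions fall out together.

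The engine is deletion of the top vertex. Let $G'=G\setm n$. Since $n$ is the largest label its neighborhood is exactly $V_n$, so $\#V_n=\#E_n$; moreover $E_j(G')=E_j(G)$ for $j<n$, whence by Theorem~\ref{Ej}(b) one has $\prod_{j=1}^n(t-\#E_j)=(t-\#E_n)\ISF(G';t)$. Every proper coloring of $G$ restricts to a proper coloring $\ka$ of $G'$, and conversely each proper $\ka$ extends by giving vertex $n$ one of the $t-|\ka(V_n)|$ colors missing from $V_n$, so
$$
P(G;t)=\sum_{\ka}\bigl(t-|\ka(V_n)|\bigr),
$$
the sum over proper colorings $\ka$ of $G'$. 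As $|\ka(V_n)|\le\#V_n=\#E_n$, every summand is at least $t-\#E_n>0$, giving $P(G;t)\ge(t-\#E_n)P(G';t)$; combined with the inductive inequality $P(G';t)\ge\ISF(G';t)$ this yields the claimed bound.

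For equality, first suppose the natural order is a perfect elimination ordering. Then $G[V_n]$ is complete, so in any proper $\ka$ the clique $V_n$ is rainbow-colored and $|\ka(V_n)|=\#E_n$; thus $P(G;t)=(t-\#E_n)P(G';t)$. The order on $[n-1]$ is then a perfect elimination ordering of $G'$, so by induction $P(G';t)=\ISF(G';t)$, and multiplying by $t-\#E_n$ gives the identity. Conversely, suppose the order is not a perfect elimination ordering. If the defect is at $n$ itself, then $G[V_n]$ is not complete, so non-adjacent $a,b\in V_n$ can be forced to share a color in some proper coloring of $G'$ (routine once $t$ is large, since $\chi(G')<n$), making one summand strictly exceed $t-\#E_n$; hence $P(G;t)>(t-\#E_n)P(G';t)\ge\ISF(G;t)$. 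If instead $G[V_n]$ is complete but the order on $[n-1]$ fails, then $P(G;t)=(t-\#E_n)P(G';t)$ while $P(G';t)\ne\ISF(G';t)$ by induction, so again the two differ. Since both sides are polynomials, agreement at infinitely many $t$ forces equality as polynomials, closing the induction.

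The main obstacle is the ``only if'' direction, and the right way around it is precisely this coupling of the inequality with the equality criterion: proving $P(G;t)\ge\ISF(G;t)$ \emph{unconditionally} means that a non-clique neighborhood $G[V_n]$ produces a \emph{strict} inequality, while a clique neighborhood lets the factor $t-\#E_n$ divide out cleanly and hands the problem back to $G'$. The one point needing care is manufacturing the witnessing coloring of $G'$ in which two non-adjacent neighbors of $n$ repeat a color; this is immediate for large $t$ by identifying $a$ and $b$ and $t$-coloring the resulting graph on $n-2$ vertices.
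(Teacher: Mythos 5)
Your argument is correct, and it is worth noting that the survey itself offers no proof of Theorem~\ref{peo} at all --- it is quoted from \cite{hs:fcp}, where the result is obtained by factoring the characteristic polynomial of the bond lattice via a partition of that lattice into intervals. Your route is genuinely different and considerably more elementary: a single induction on $n$ that deletes the top vertex, uses the exact count $P(G;t)=\sum_{\ka}(t-|\ka(V_n)|)$ over proper colorings $\ka$ of $G'=G\setm n$, and couples the unconditional bound $P(G;t)\ge\ISF(G;t)$ (for integers $t\ge n$, where every factor $t-\#E_j$ is positive) with the equality criterion. The key mechanism is sound: if $G[V_n]$ is a clique then $|\ka(V_n)|=\#E_n$ always and the factor $t-\#E_n$ splits off exactly, handing the problem to $G'$; if $G[V_n]$ is not a clique, identifying two non-adjacent neighbors of $n$ produces a proper coloring of $G'$ (possible since the identified graph has $n-2<t$ vertices) whose summand strictly exceeds $t-\#E_n$, and positivity of all other factors turns this into a strict polynomial inequality. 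All the small verifications check out: $\#V_n=\#E_n$ because $n$ is the largest label, $E_j(G')=E_j(G)$ for $j<n$, and a perfect elimination ordering of $G$ restricts to one of $G'$ since $V_j\sbe[j-1]$. What your approach buys is self-containedness at the level of this survey (it needs only Theorem~\ref{Ej}(b) and basic coloring counts); what the lattice-theoretic proof buys is the generalization to characteristic polynomials of arbitrary lattices that is the actual subject of \cite{hs:fcp}. The only presentational quibble is that you should state the trivial base case $n=1$ explicitly.
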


%%%%%%%%%%%%%%%%%%%%%%%%%%%%%%%%%%%%%%%%%%%%%%

\section{Going further}
\label{gf}

We will now discuss even more striking results related to the chromatic polynomial.  These will include a generalization to symmetric functions and connections with algebraic geometry.

\subsection{Credit where credit is due}

For pedagogical reasons, the results presented in the previous section only gave a partial picture of the authors' contributions.  Here we will take a wider view.

Since the chromatic polynomial of $G=(V,E)$ at $t=-1$ has a nice combinatorial interpretation, one might ask what happens at negative integers in general.
Let $t\in\bbP$ and $\ka:V\ra[t]$ a coloring which may not be proper.
Also consider an acyclic orientation $O=(V,A)$ of $G$.  We say that $O$ and $\ka$
are {\em compatible} if
$$
\vec{uv}\in A \implies \ka(u)\le\ka(v).
$$
So $O$ is like a gradient vector field, always pointing from lower to higher values of $\ka$.  Stanley's full theorem is as follows.
\begin{thm}[\cite{sta:aog}]
\label{pair}
For any graph $G=(V,E)$ with $\#V=n$ and any $t\in\bbP$ we have

\vs{8pt}

\eqed{
P(G;-t) =(-1)^n \cdot \#\{(O,\ka) \mid \text{$O$ and $\ka$ are compatible}\}.
}
\end{thm}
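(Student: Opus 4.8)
The plan is to prove the stronger identity by the same deletion-contraction strategy that established every earlier structural fact about $P(G;t)$, since the right-hand side counts combinatorial objects and deletion-contraction is the tool that meshes with such counts. Define
\[
A(G;t) = \#\{(O,\ka) \mid \text{$O$ is an acyclic orientation of $G$ and $\ka:V\ra[t]$ is compatible with $O$}\},
\]
so that our goal is $P(G;-t) = (-1)^n A(G;t)$ for all $t\in\bbP$. Because $P(G;-t)$ is a polynomial in $t$ of degree $n$, it suffices to show that $A(G;t)$ is itself a polynomial satisfying the same recursion, together with a matching base case. I would verify the base case first: if $E=\emp$ then the only orientation is the empty one, every $\ka:V\ra[t]$ is vacuously compatible, so $A(G;t)=t^n$, while $P(G;-t)=(-t)^n=(-1)^n t^n$, and the two sides agree.

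The heart of the argument is a deletion-contraction recursion for $A(G;t)$ dual to the one for $P(G;t)$. Fix an edge $e=uv$. I would classify each compatible pair $(O,\ka)$ on $G\setm e$ according to the values $\ka(u)$ and $\ka(v)$. When $\ka(u)\neq\ka(v)$, the orientation of $e$ is forced (it must point from the smaller color to the larger), so such pairs correspond bijectively to compatible pairs on $G$ itself; these are counted by $A(G;t)$. When $\ka(u)=\ka(v)$, no orientation of $e$ can be compatible (compatibility would demand a strict-or-equal inequality in one direction, but a genuine arc in a cycle through $e$ could be created), and instead such pairs correspond to compatible pairs on the contraction $G/e$, counted by $A(G/e;t)$. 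Assembling the cases gives
\[
A(G\setm e;t) = A(G;t) + A(G/e;t),
\]
which rearranges to $A(G;t)=A(G\setm e;t)-A(G/e;t)$, precisely the Deletion-Contraction Lemma's form. Since this recursion and the base case are identical to those characterizing $(-1)^n P(G;-t)$ after accounting for the sign (contraction drops the vertex count by one, flipping $(-1)^n$ to $(-1)^{n-1}$ and matching the sign on $A(G/e;t)$), an induction on $\#E$ finishes the proof.

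\emph{The main obstacle} I anticipate is the acyclicity bookkeeping in the contraction case: when $\ka(u)=\ka(v)$ I must argue that \emph{no} compatible orientation of $e$ exists for an acyclic $O$ on $G$, and simultaneously that acyclic orientations of $G\setm e$ in this monochromatic case correspond exactly to acyclic orientations of $G/e$. The delicate point is that contracting $e$ can merge two arcs into one and could in principle create a directed cycle that was not present before, so I would need to check carefully that the identification of the color $\ka'(v')$ with the common value $\ka(u)=\ka(v)$ preserves acyclicity in both directions. Once this bijection is pinned down, the remaining steps are the routine verification that the signs and degrees propagate correctly through the induction.
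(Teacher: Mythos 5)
Your overall skeleton---induction on $\#E$ via a deletion--contraction recursion for the pair count $A(G;t)$, with base case $A(G;t)=t^n$ when $E=\emp$---is exactly the right one (it is Stanley's original argument; the survey itself only cites the proof). But the central combinatorial step is wrong, and the error sits at the very point that makes the theorem true. Compatibility is defined by the \emph{weak} inequality $\vec{uv}\in A\implies\ka(u)\le\ka(v)$. So when $\ka(u)=\ka(v)$ it is false that ``no orientation of $e$ can be compatible'': \emph{both} orientations of $e=uv$ satisfy the coloring condition, and at least one of them preserves acyclicity, since an acyclic $O'$ on $G\setm e$ cannot contain directed paths both from $u$ to $v$ and from $v$ to $u$. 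The correct accounting is by counting extensions: a compatible pair $(O',\ka)$ of $G\setm e$ with $\ka(u)\ne\ka(v)$ extends to exactly one compatible pair of $G$ (the forced orientation, which you correctly handled), while a pair with $\ka(u)=\ka(v)$ extends to one or two, and the pairs admitting two extensions are in bijection with the compatible pairs of $G/e$. This gives
$$
A(G;t)=A(G\setm e;t)+A(G/e;t),
$$
which is the opposite arrangement from your $A(G\setm e;t)=A(G;t)+A(G/e;t)$. Your version is demonstrably false: for $G=K_2$ one computes $A(G;t)=t(t+1)$, $A(G\setm e;t)=t^2$, and $A(G/e;t)=t$, so $A(G\setm e;t)\ne A(G;t)+A(G/e;t)$.

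The sign bookkeeping must then be redone as well. Writing $\Pb(G;t)=(-1)^{\#V}P(G;-t)$, the Deletion--Contraction Lemma gives $\Pb(G;t)=\Pb(G\setm e;t)+\Pb(G/e;t)$, because the minus sign in $P(G)=P(G\setm e)-P(G/e)$ is absorbed by the drop in vertex count under contraction. This all-plus recursion matches the correct recursion for $A$ displayed above and the induction closes; it does \emph{not} match the recursion you derived, so even granting your case analysis the induction would fail. In short: keep the framework, replace the dichotomy on $e$ by the extension count, and then prove the bijection between doubly-extendable pairs on $G\setm e$ and compatible pairs on $G/e$---which is precisely the acyclicity-under-contraction issue you correctly flagged as the delicate point, resolved by noting that both extensions are acyclic if and only if $O'$ has no directed path between $u$ and $v$ in either direction, if and only if $O'/e$ is acyclic.
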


Note that this result implies Theorem~\ref{sta}.  For if $t=1$ then there is only one coloring $\ka:V\ra[1]$.  And this coloring is compatible with any acyclic orientation.  So in this case, the number of compatible pairs is just the number of acyclic orientations.  Sagan and Vatter~\cite{sv:bpp} have given a bijective proof of Theorem~\ref{pair}.

As far as Theorem~\ref{zas}, there is actually a stronger result which holds for any hyperplane arrangement.  Given an arrangement $\cH$, consider all the subspaces $S$ of $\bbR^n$ which can be formed by intersecting hyperplanes in $\cH$.  This includes $\bbR^n$ itself which is the empty intersection.  Partially order the subspaces by reverse inclusion to form a poset (partially ordered set) called the {\em intersection lattice} of $\cH$ and denoted $L(\cH)$.  Note that $\bbR^n$ is the minimum element of $L(\cH)$.  Given any finite poset $P$ with  a minimum there is an associated function
$$
\mu:P\ra\bbZ
$$
call the {\em M\"obius function} of $P$.  This map is a vast generalization of the M\"obius function from number theory and more information about it can be found in the texts of Sagan~\cite{sag:aoc} or Stanley~\cite{sta:ec1}.  We can now form the {\em characteristic polynomial} of $\cH$ which is the generating function
$$
\chi(\cH;t) = \sum_{S\in L(\cH)} \mu(S)\ t^{\dim S}.
$$
It turns out that if $\cH=\cH(G)$ for some graph $G$ then $\chi(\cH;t)=P(G;t)$.
Here is the full strength of Theorem~\ref{zas}.
\begin{thm}[\cite{zas:fua}]
For any hyperplane arrangement $\cH$ in $\bbR^n$ we have

\vs{8pt}

\eqed{
\chi(\cH;-1) = (-1)^n \cdot \#R(\cH).
}
\end{thm}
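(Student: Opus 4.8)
The plan is to prove the identity by \emph{deletion-restriction}, the hyperplane-arrangement analogue of the deletion-contraction recursion used in Section~\ref{wp}, combined with induction on the number of hyperplanes. Fix a hyperplane $H_0\in\cH$. The \emph{deletion} is $\cH\setm H_0$, the arrangement in $\bbR^n$ obtained by simply removing $H_0$. The \emph{restriction} is the arrangement
$$
\cH^{H_0}=\{H\cap H_0 \mid H\in\cH\setm H_0\}
$$
living inside $H_0\iso\bbR^{n-1}$, where we retain only the distinct members of this collection. Both arrangements have strictly fewer hyperplanes than $\cH$ (the restriction may have far fewer, since distinct $H$ can meet $H_0$ in the same subspace), so they are available to an induction on the number of hyperplanes carried out simultaneously over all ambient dimensions.

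The heart of the argument is a pair of parallel recursions, one for each side of the desired equation. On the geometric side, adding $H_0$ back to $\cH\setm H_0$ slices certain regions in two, and the number of regions that get cut equals the number of regions of the induced arrangement $\cH^{H_0}$ on $H_0$ (intersecting $H_0$ with a split region gives a bijection onto $R(\cH^{H_0})$). This yields
$$
\#R(\cH)=\#R(\cH\setm H_0)+\#R(\cH^{H_0}).
$$
On the enumerative side, the characteristic polynomials obey
$$
\chi(\cH;t)=\chi(\cH\setm H_0;t)-\chi(\cH^{H_0};t).
$$
To establish this I would compare the three intersection lattices: $L(\cH\setm H_0)$ sits inside $L(\cH)$ as the subspaces expressible without $H_0$, while $L(\cH^{H_0})$ is identified with the filter of subspaces of $L(\cH)$ that lie inside $H_0$. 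Splitting the defining sum for $\chi(\cH;t)$ according to whether a subspace is contained in $H_0$, and tracking how the M\"obius values $\mu(S)$ of $L(\cH)$ relate to those of the two smaller lattices, produces the claimed signed recursion.

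With both recursions in hand the theorem follows quickly. The base case is the empty arrangement in $\bbR^n$, where $L(\cH)=\{\bbR^n\}$ gives $\chi(\cH;t)=t^n$ and there is a single region, so $\chi(\cH;-1)=(-1)^n=(-1)^n\#R(\cH)$. For the inductive step, evaluate the characteristic-polynomial recursion at $t=-1$ and apply the inductive hypothesis to the deletion (still in $\bbR^n$) and to the restriction (in $\bbR^{n-1}$):
$$
\chi(\cH;-1)=(-1)^n\#R(\cH\setm H_0)-(-1)^{n-1}\#R(\cH^{H_0}).
$$
Since $-(-1)^{n-1}=(-1)^n$, the right-hand side equals $(-1)^n\bigl(\#R(\cH\setm H_0)+\#R(\cH^{H_0})\bigr)$, which by the region recursion is $(-1)^n\#R(\cH)$, completing the induction.

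The main obstacle is the characteristic-polynomial recursion, that is, controlling how the M\"obius function of the larger lattice $L(\cH)$ decomposes in terms of those of $L(\cH\setm H_0)$ and $L(\cH^{H_0})$; this requires careful bookkeeping of which subspaces are newly created by $H_0$ and an appeal to the defining recursion for $\mu$. By contrast, the geometric region recursion and the final sign-chasing assembly are comparatively routine.
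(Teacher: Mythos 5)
The paper does not actually prove this theorem: it is quoted from Zaslavsky's memoir, and only its graphical special case (Theorem~\ref{zas}) is discussed in the text. So your proposal has to stand on its own, and on its own it is the standard argument---deletion-restriction plus induction on the number of hyperplanes---and its skeleton is sound. The region recursion $\#R(\cH)=\#R(\cH\setm H_0)+\#R(\cH^{H_0})$ is correct (the regions of $\cH\setm H_0$ that $H_0$ cuts in two do biject with the regions of the restriction), the base case $\chi(\emp;t)=t^n$ is right, and the final sign-chase at $t=-1$ closes the induction exactly as you say.

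The one step that could fail as written is the one you flag yourself: the recursion $\chi(\cH;t)=\chi(\cH\setm H_0;t)-\chi(\cH^{H_0};t)$. Your plan is to split the defining sum $\sum_{S\in L(\cH)}\mu(S)\,t^{\dim S}$ according to whether $S\sbe H_0$. Two concrete warnings. First, that dichotomy does not line up with membership in $L(\cH\setm H_0)$: a subspace can lie inside $H_0$ and still be an intersection of hyperplanes other than $H_0$, so ``newly created by $H_0$'' and ``contained in $H_0$'' are different conditions. Second, and more seriously, even for a subspace $S$ belonging to both $L(\cH)$ and $L(\cH\setm H_0)$, the M\"obius values computed in the two lattices generally disagree, so you cannot transfer $\mu(S)$ term by term; a direct comparison of the three M\"obius functions is genuinely painful. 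The clean repair is to first convert the M\"obius-function definition of $\chi$ into its Whitney-type expansion $\chi(\cH;t)=\sum_{\cB\sbe\cH}(-1)^{\#\cB}\,t^{\dim\bigcap\cB}$ (the arrangement analogue of equation~\eqref{NBCsum}; all intersections are nonempty here because the arrangement is central), and then split \emph{that} sum according to whether $H_0\in\cB$: the subsets omitting $H_0$ give $\chi(\cH\setm H_0;t)$, and a short inclusion--exclusion over the hyperplanes of $\cH\setm H_0$ having the same trace on $H_0$ identifies the remaining terms with $-\chi(\cH^{H_0};t)$. With that substitution your outline becomes a complete proof.
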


Hallam, Martin, and Sagan were able to improve on Theorem~\ref{peo}.  Let $G=(V,E)$ be a graph with $V=[n]$ and define
$$
\cI\cS\cF_k(G)=\{ F \mid \text{$F$ is an increasing spanning forest of $G$ with $k$ edges}\}
$$
so $\#\cI\cS\cF_k(G)=\isf_k(G)$.  Since $V=[n]$ and $E$ is a set of pairs of vertices, order $E$ lexicographically where each $e=ij\in E$ is listed with $i<j$.  Let
$$
\cN\cB\cC_k(G) = \{A\sbe E \mid \text{$A$ is an NBC set with $k$ edges}\}
$$
so that $\#\cN\cB\cC_k(G)=\nbc_k(G)$.
\begin{thm}[\cite{hms:isf}]
Let $G=(V,E)$ be a graph with $V=[n]$ and $E$ ordered lexicographically.  For all $k\in\bbN$ we have
$$
\cI\cS\cF_k(G)\sbe \cN\cB\cC_k(G)
$$
with equality for all $k$ if and only if the natural order on $[n]$ is a perfect elimination order.\hqed
\end{thm}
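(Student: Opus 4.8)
The plan is to prove the two assertions separately: first the containment $\cI\cS\cF_k(G)\sbe\cN\cB\cC_k(G)$, which holds unconditionally, and then the characterization of when equality holds for every $k$. Throughout I would lean on Theorem~\ref{Ej}(a), which says that a subgraph $F$ is an increasing spanning forest exactly when $|F\cap E_j|\le 1$ for every $j$. I would also record at the outset that every NBC set is a forest: if a set of edges contained a cycle $C$, then deleting the smallest edge of $C$ would exhibit a broken circuit inside it. Thus both families consist of forests, and the whole problem reduces to understanding how broken circuits interact with the sets $E_j$.

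For the containment I would argue the contrapositive: an increasing spanning forest $F$ cannot contain a broken circuit. Suppose $B=C\setm\{e_0\}$ is a broken circuit, where $C$ is a cycle and $e_0$ is its lex-smallest edge, and suppose $B\sbe F$. Let $a$ be the smallest vertex on $C$; its two cycle-neighbors both exceed $a$, say they are $p$ and $q$ with $a<p<q$. Since $ap$ and $aq$ are the only edges of $C$ with first coordinate $a$ while every other edge has larger first coordinate, the lex-smallest edge is $e_0=ap$, so $B$ is the path running $a,q,\dots,p$. Its largest vertex $w$ satisfies $w\ge q>p>a$, hence $w$ is neither endpoint of the path; therefore $w$ is interior, its two path-edges both have $w$ as their larger endpoint, and so $|B\cap E_w|=2$. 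As $B\sbe F$ this forces $|F\cap E_w|\ge 2$, contradicting Theorem~\ref{Ej}(a). Thus every $F\in\cI\cS\cF_k(G)$ is an NBC set with $k$ edges, giving $\cI\cS\cF_k(G)\sbe\cN\cB\cC_k(G)$ for all $k$.

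Because the containment always holds, equality can fail for some $k$ precisely when there exists an NBC set that is not an increasing spanning forest, and I would pin down exactly when such a set exists. If $A$ is NBC but not increasing, then $A$ is a forest with $|A\cap E_j|\ge 2$ for some $j$ by Theorem~\ref{Ej}(a); choosing two edges $ij,i'j\in A$ with $i<i'<j$, I claim $ii'\notin E$, for otherwise $\{ij,i'j,ii'\}$ is a triangle whose lex-smallest edge is $ii'$, making $\{ij,i'j\}$ a broken circuit sitting inside $A$. Conversely, given any $i<i'<j$ with $ij,i'j\in E$ but $ii'\notin E$, the set $\{ij,i'j\}$ is a forest with both edges in $E_j$, hence not increasing; and it is NBC, since any broken circuit contained in a two-edge set must itself have two edges and so come from a triangle, which here would require the absent edge $ii'$. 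Therefore an NBC set outside $\cI\cS\cF$ exists if and only if some vertex $j$ has two smaller neighbors $i,i'$ that are nonadjacent, i.e.\ if and only if some lower-neighborhood $V_j$ fails to be a clique. Since the natural order on $[n]$ is a perfect elimination ordering exactly when every $G[V_j]$ is complete, this is precisely the negation of the perfect elimination condition, yielding the stated equivalence.

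I expect the main obstacle to be the containment step, specifically the observation that the maximum vertex of the path $B$ is interior rather than an endpoint; this is what produces two edges sharing a common larger endpoint, and it is the one place where the lexicographic choice of the smallest edge of a cycle is genuinely used. Once the containment and Theorem~\ref{Ej}(a) are in hand, the second part is largely bookkeeping organized around the single triangle/broken-circuit computation.
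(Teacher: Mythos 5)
Your argument is correct. Note that the survey states this theorem without proof, deferring to \cite{hms:isf}, so there is no in-paper argument to compare against; judged on its own, your proof is complete once Theorem~\ref{Ej}(a) is granted, which you are entitled to quote. The two delicate points are both handled properly. For the containment, you correctly identify the lex-smallest edge of a cycle $C$ as the one joining its minimum vertex $a$ to the smaller of $a$'s two cycle-neighbors, which forces the maximum vertex $w$ of $C$ to be an interior vertex of the resulting broken circuit and hence to contribute two edges to $E_w$, contradicting Theorem~\ref{Ej}(a) for any increasing forest containing it. For the equivalence, the observation that a two-edge set $\{ij,i'j\}$ with $i<i'<j$ is a broken circuit exactly when $ii'\in E$ is the right pivot: it shows in one direction that an NBC set violating the $|A\cap E_j|\le 1$ condition produces two nonadjacent vertices in $V_j$, and in the other that any such nonadjacent pair yields a two-edge NBC set that is not increasing, so equality holds for all $k$ precisely when every $G[V_j]$ is complete, i.e., when the natural order is a perfect elimination ordering. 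Your closing remark about where the lexicographic order is genuinely used is also accurate.
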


In~\cite{hms:isf} the authors also generalize both Theorems~\ref{Ej} and~\ref{peo} to certain pure simplicial complexes of dimension $d$.  When $d=1$, the graphical results are recovered.

\subsection{Chromatic symmetric functions}

Stanley~\cite{sta:sfg} generalized the chromatic polynomial to a symmetric function.  Let $\bx=\{x_1,x_2,x_3,\ldots\}$ be a countably infinite set of variables.  A formal power series $f(\bx)$ is {\em symmetric} if it is of bounded degree and invariant under permutations of the variables.  For example
$$
f(\bx)=7 x_1 x_2^2 + 7 x_2 x_1^2 + 7 x_1 x_3^2 + \cdots
-2 x_1 x_2 x_3 - 2 x_1 x_2 x_4 - 2 x_1 x_3 x_4 - \cdots
$$
is symmetric since all monomials of the form $x_i x_j^2$ have coefficient $7$, and all monomials of the form $x_i x_j x_k$ have coefficient $-2$.

\bfi
\begin{tikzpicture}
\draw(1,-.8) node{$P$};
\draw (0,0)--(1,2)--(2,0);
\fill(0,0) circle(.1);
\draw(-.5,0) node{$u$};
\fill(2,0) circle(.1);
\draw(2.5,0) node{$w$};
\fill(1,2) circle(.1);
\draw(1,2.5) node{$v$};
\end{tikzpicture}
\hs{20pt}
\begin{tikzpicture}
\draw(1,-.8) node{$\ka$};
\draw (0,0)--(1,2)--(2,0);
\fill(0,0) circle(.1);
\draw(-.5,0) node{$1$};
\fill(2,0) circle(.1);
\draw(2.5,0) node{$1$};
\fill(1,2) circle(.1);
\draw(1,2.5) node{$2$};
\end{tikzpicture}
\hs{20pt}
\begin{tikzpicture}
\draw(1,-.8) node{$\ka'$};
\draw (0,0)--(1,2)--(2,0);
\fill(0,0) circle(.1);
\draw(-.5,0) node{$2$};
\fill(2,0) circle(.1);
\draw(2.5,0) node{$2$};
\fill(1,2) circle(.1);
\draw(1,2.5) node{$1$};
\end{tikzpicture}
\capt{A path and two of its colorings \label{P}}
\efi

Consider colorings of a graph $G=(V,E)$ using the positive integers $\ka:V\ra\bbP$.  Associated with each such coloring is its {\em monomial}
$$
\bx^\ka =\prod_{v\in V} x_{\ka(v)}.
$$
Going back to our faithful example graph in Figure~\ref{G}, the middle coloring has $\bx^\ka=x_1^2 x_2^2$ while the one on the right has 
$\bx^{\ka'}=x_1 x_2^2 x_3$.  We now define the {\em chromatic symmetric function} of $G$ to be
$$
X(G)=X(G;\bx) = \sum_{\ka:V\ra\bbP} \bx^\ka
$$
where the sum is over proper colorings $\ka:V\ra\bbP$.  As an example, consider the path $P$ as shown in Figure~\ref{P}.  There are no proper colorings of $P$ with a single color.  Suppose we wish to use the two colors $1$ and $2$.  Then there are two possibilities as shown in the figure which contribute $\bx^\ka=x_1^2 x_2$ and $\bx^{\ka'}=x_1 x_2^2$ to $X(P)$.  The same argument shows that we get a term $x_i x_j^2$ for any distinct $i,j\in\bbP$.  Now consider using three colors on $P$, say $1$, $2$, and $3$.  Then any bijection $\ka:V\ra[3]$ is proper.  There are $6$ such maps for a contribution of 
$6x_1 x_2 x_3$.  Again, the choice of these three particular colors is immaterial so we get a term $6 x_i x_j x_k$ for any three positive integers $i,j,k$.  Putting things all together we obtain
$$
X(P) = \sum_{\text{$i,j$ distinct}} x_i^2 x_j 
+ 6 \sum_{\text{$i,j,k$ distinct}} x_i x_j x_k
$$
which is a symmetric function.  In general, $X(G)$ is symmetric because permuting colors in a proper coloring leaves it proper.  

Note also that $X(G;\bx)$ generalizes $P(G;t)$ in the following way.  Set 
\begin{equation}
\label{sub}
\text{$x_1=x_2=\ldots=x_t=1$ and  $x_i=0$ for $i>t$}.
\end{equation}
  Then each $\bx^\ka$ equals $0$ or $1$, and the latter happens only when $\ka$ uses colors in $[t]$.  So, under this substitution
$$
X(G;\bx) = \sum_{\ka:V\ra[t]} 1 = P(G;t)
$$
by definition of the chromatic polynomial.

One can now prove symmetric function generalizations of results about chromatic polynomials and also theorems about $X(G)$ which do not have analogues for $P(G)$.  To illustrate, we give an analogue of Whitney's NBC Theorem. Define
$$
c(G) = \text{number of components of $G$}.
$$
Note that if $G=(V,E)$ with $\#V=n$ and $A\in\cN\cB\cC_k(G)$ then $A$ is a forest, since if $A$ contained any cycle it would contain the corresponding broken circuit.  And $A$ is a spanning subgraph so $c(A)=n-k$.  Thus we can rewrite Theorem~\ref{whi} as
\begin{equation}
\label{NBCsum}
P(G;t) = \sum_{k=0}^n\ \sum_{A\in\cN\cB\cC_k(G)}\ (-1)^k\ t^{n-k}
=\sum_{\text{$A$ NBC}} (-1)^{\#A}\ t^{c(A)}.
\end{equation}

Symmetric functions form an algebra whose bases are indexed by {\em partitions}  which are weakly decreasing sequences
$\la=(\la_1,\la_2,\ldots,\la_l)$
of positive integers called {\em parts}.  Consider the {\em power sum} symmetric function basis defined multiplicitively by
$$
p_n = x_1^n + x_2^n + x_3^n +\cdots
$$
for $n\in\bbP$ and 
$$
p_\la = p_{\la_1} p_{\la_2} \cdots p_{\la_l}.
$$
To illustrate
$$
p_3 = x_1^3 + x_2^3 + x_3^3 + \cdots
$$
and
$$
p_{(3,3,1)} = p_3 p_3 p_1 =  (x_1^3 + x_2^3 + x_3^3 + \cdots)^2
(x_1 + x_2 + x_3 + \cdots).
$$
Note that using the substitution~\eqref{sub} we get $p_n=t$ and $p_\la=t^l$ where $l$ is the number of parts of $\la$.  Given any graph $G$ there is a corresponding partition $\la(G)$ whose parts are just the vertex sizes of the components of $G$.  As an example, in Figure~\ref{F} we have $\la(F)=\la(F')=(3,1)$.
The usual substitution shows that  the following result generalizes Whitney's Theorem in the form~\eqref{NBCsum},
\begin{thm}[\cite{sta:sfg}]
For any graph $G=(V,E)$ and any total order on $E$ we have

\vs{8pt}

\eqed{
X(G;x) = \sum_{\text{$A$ NBC}} (-1)^{\#A}\ p_{\la(A)}.
}
\end{thm}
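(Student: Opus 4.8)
The plan is to establish an \emph{order-free} power-sum expansion of $X(G)$ over \textbf{all} edge subsets and then collapse it onto the NBC sets by a single sign-reversing involution that preserves $\la$. Concretely, I would first prove
$$
X(G;\bx)=\sum_{S\sbe E}(-1)^{\#S}\,p_{\la(S)},
$$
where $\la(S)$ is the partition whose parts are the vertex-sizes of the components of the spanning subgraph $(V,S)$. Applying the substitution~\eqref{sub} to the target identity sends each $p_{\la(A)}$ to $t^{c(A)}$ and returns Whitney's Theorem in the form~\eqref{NBCsum}, so the statement is exactly the symmetric-function lift of that identity, and the proof should parallel a proof of Whitney's Theorem.

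For the first step I would argue by inclusion--exclusion on the monochromatic edges. Writing the properness condition as $\prod_{uv\in E}(1-\de_{uv})$, where $\de_{uv}$ is the indicator that $\ka(u)=\ka(v)$, and expanding the product gives
$$
X(G)=\sum_{\ka:V\ra\bbP}\bx^\ka\prod_{uv\in E}(1-\de_{uv})
=\sum_{S\sbe E}(-1)^{\#S}\sum_{\ka}\bx^\ka,
$$
where the inner sum runs over colorings $\ka$ that are constant on each component of $(V,S)$. Such a coloring assigns one color to each component, so a component with $m$ vertices contributes a factor $\sum_c x_c^{\,m}=p_m$; multiplying over components yields exactly $p_{\la(S)}$, which proves the all-subsets expansion.

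The heart of the argument is the involution. Fix the edge order $e_1<\dots<e_m$ and, for $S\sbe E$, let $e$ be the \emph{smallest} edge whose two endpoints are joined by a path using only edges of $S$ strictly larger than $e$. If no such $e$ exists, declare $S$ a fixed point; otherwise map $S$ to $S\cup\{e\}$ or $S\sm\{e\}$ according as $e\notin S$ or $e\in S$. I would then verify three things: (i) toggling $e$ changes $\#S$ by one, so the map is sign-reversing; (ii) since the endpoints of $e$ are already joined by larger edges (a path that avoids $e$ and survives the toggle), adding or deleting $e$ neither merges nor splits components, so $\la(S)$ is preserved; and (iii) the fixed points are precisely the NBC sets---if $S$ contains a broken circuit $C\sm\{\min C\}$ then $e=\min C$ meets the condition, while conversely any witnessing path together with $e$ is a cycle whose minimum edge is $e$, exhibiting a broken circuit inside $S$. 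Granting these, the non-NBC terms cancel in pairs of opposite sign and equal $p_{\la}$, leaving $\sum_{A\ \mathrm{NBC}}(-1)^{\#A}\,p_{\la(A)}$.

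The main obstacle is checking that this map is genuinely an \emph{involution}, that is, that toggling the chosen edge $e$ does not change which edge gets chosen. The key point---which I would isolate as a short lemma---is that because $e$'s endpoints are connected by edges larger than $e$, the edge $e$ is redundant for connectivity within $\{g\in S:g>e'\}$ for every $e'<e$; hence toggling $e$ alters none of the connectivity relations that decide whether a smaller edge could have been selected, and it leaves the defining condition for $e$ itself untouched. Proving this invariance carefully is the crux; once it is in hand, the map reselects $e$ and toggles back, and the theorem follows.
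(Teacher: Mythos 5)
The survey states this theorem without proof, citing Stanley's paper, so there is no in-text argument to compare against; judged on its own, your proposal is correct and essentially complete. The two-step structure---first the order-free expansion $X(G)=\sum_{S\sbe E}(-1)^{\#S}p_{\la(S)}$ obtained by expanding $\prod_{uv\in E}(1-\de_{uv})$ and noting that a coloring monochromatic on each component of $(V,S)$ contributes $\prod p_{\#C}=p_{\la(S)}$, then a sign-reversing, $\la$-preserving involution cancelling the non-NBC terms---is precisely how this is proved in the literature: the first step is Stanley's power-sum expansion, and the toggle is the Whitney/Blass--Sagan involution that the survey itself alludes to after Theorem 3.1. Your treatment of the crux is right: for $f<e$ the edge sets $\{g\in S:g>f\}$ and $\{g\in S\sd\{e\}:g>f\}$ differ only in $e$, whose endpoints are already joined by a path of edges $>e>f$ present in both, so the connectivity relations deciding whether $f$ could be selected are unchanged; for $f=e$ the relevant set $\{g\in S:g>e\}$ is literally unchanged; hence $e$ is reselected and the map toggles back. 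Two small points you should make explicit when writing this up: the witnessing path for $e$ necessarily has length at least two (a length-one witness would be an edge parallel to $e$, impossible in a simple graph), so the path together with $e$ genuinely forms a cycle of $G$ and the fixed points are exactly the NBC sets; and the interchange of the sum over colorings with the sum over $S$ is legitimate because everything lives in formal power series of bounded degree $\#V$.
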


However, the symmetric function does not always mirror the polynomial.  Let $T$ be a tree with $n$ vertices.  Then it is easy to see that for all such trees $P(T;t)=t(t-1)^{n-1}$.  It seems as if the opposite may be true for $X(T;t)$.  Call two graphs {\em isomorphic} if they yield the same graph once the labels on the vertices are removed. For example, the two forests in Figure~\ref{F} are isomorphic.  For more information about the following conjecture, the reader can consult~\cite{az:pcd,mmw:dtc,os:gec}.
\begin{conj}[\cite{sta:sfg}]
If $T$ and $T'$ are non-isomorphic trees then 
$$
X(T;\bx)\neq X(T';x).
$$
\end{conj}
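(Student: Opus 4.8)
The plan is to treat the conjecture as a tree-reconstruction problem: first translate the hypothesis $X(T;\bx)=X(T';\bx)$ into a statement about explicit combinatorial counts, and then attempt to recover the isomorphism type of a tree from those counts. Since a tree has no cycles, every subset of its edges is an NBC set, so the power-sum expansion of the preceding theorem specializes to
$$
X(T;\bx)=\sum_{A\sbe E}(-1)^{\#A}\,p_{\la(A)},
$$
where $\la(A)$ records the vertex sizes of the components of the spanning forest $(V,A)$. A forest with $k$ edges on $n$ vertices has exactly $n-k$ components, so the sign $(-1)^{\#A}=(-1)^{n-\ell(\la(A))}$ is forced by $\la(A)$ alone. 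Grouping terms, this becomes $X(T;\bx)=\sum_{\la\vdash n}(-1)^{n-\ell(\la)}a_\la(T)\,p_\la$, where $a_\la(T)$ is the number of ways to split $T$ into subtrees whose sizes are the parts of $\la$. As the $p_\la$ are linearly independent, knowing $X(T;\bx)$ is \emph{equivalent} to knowing the whole family $\{a_\la(T)\}_{\la\vdash n}$. Hence it suffices to show that $\{a_\la(T)\}$ determines $T$ up to isomorphism.

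The next step is to extract reconstructible invariants from this family and to see how much of $T$ they pin down. For hook shapes $\la=(k,1^{n-k})$ one reads off the number of edges and, by inclusion-exclusion, the number of subtrees of each size; more elaborate shapes encode the degree sequence and the number of paths of each length. A natural line of attack is then an induction on $n$: locate a canonical feature of $T$ that is visible in $\{a_\la(T)\}$---for instance the leaves or the centroid---remove it, and argue that the invariants of the smaller tree are themselves determined, so that the inductive hypothesis reconstructs the bulk of $T$ while the extracted statistics fix how the removed part attaches. This program does succeed for structurally constrained families such as caterpillars and spiders, where the path and degree data genuinely suffice, and these cases should be proved first, both as evidence and as a source of the right bookkeeping.

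The hard part---and the reason the statement remains a conjecture rather than a theorem---is the reconstruction step for \emph{arbitrary} trees. The data $\{a_\la(T)\}$, together with every statistic so far known to be derivable from it (subtree-size distribution, degree sequence, path sequence), is purely enumerative and global; the obstacle is that two non-isomorphic trees can agree on vast amounts of such counting data, so no presently known list of $X$-determined invariants has been proved complete. The situation has the flavor of the graph reconstruction conjecture, where counting invariants characteristically fall just short of determining the object. Closing the gap would require either a genuinely new invariant extractable from $X(T;\bx)$ that is provably complete on trees, or a structural algorithm that rebuilds $T$ from $\{a_\la(T)\}$ with an induction one can actually control---and it is precisely here that every known approach has so far stalled.
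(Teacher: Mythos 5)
This statement is an open conjecture of Stanley, and the paper gives no proof of it --- it is stated as a \verb|conj| environment precisely because no proof is known. So there is nothing in the paper to compare your argument against, and, as you yourself acknowledge in your final paragraph, your writeup is not a proof either: it is a correct reduction followed by an honest admission that the decisive step is missing.

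Your preliminary reductions are sound and match the standard reformulation in the literature. Since a tree has no cycles it has no broken circuits, so every $A\sbe E$ is an NBC set and the power-sum expansion collapses to $X(T;\bx)=\sum_{A\sbe E}(-1)^{\#A}p_{\la(A)}$; the linear independence of the $p_\la$ then makes $X(T;\bx)$ equivalent to the multiset data $\{a_\la(T)\}_{\la\vdash n}$ counting partitions of $T$ into subtrees of prescribed sizes. It is also correct (and known, e.g.\ from the Martin--Morin--Wagner paper the survey cites) that the degree sequence, the path sequence, and the subtree-size counts are all recoverable from this data, and that the conjecture is settled for restricted families such as caterpillars and spiders. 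The genuine gap is the one you name: no one has shown that $\{a_\la(T)\}$ determines an arbitrary tree up to isomorphism, and no complete list of $X$-determined invariants is known. Until that reconstruction step is supplied, what you have is a correct framing of the problem and a summary of partial progress, not a proof; it would be appropriate to present it as such rather than as a proof proposal.
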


Gessel~\cite{ges:mpp} introduced quasisymmetric functions which are an important refinement of symmetric functions.
Recently Shareshian and Wachs~\cite{sw:cqf} showed that there is a quasisymmetric refinement of $X(G;\bx)$.  This quasisymmetric function has important connections with Hessenberg varieties in algebraic geometry.

\subsection{Log-concavity}

A sequence of real numbers $a_0,a_1,\ldots,a_n$ is said to be {\em log-concave} if 
$$
a_k^2 \ge a_{k-1} a_{k+1}
$$
for all $0<k<n$.  As an example, the $n$th row of Pascal's Triangle
$$
\binom{n}{0}, \binom{n}{1},\ldots,\binom{n}{n}
$$
can be easily shown to be log-concave by using the formula for a binomial coefficient in terms of factorials.  Log-conave sequences abound in combinatorics, algebra, and geometry.  See the survey articles of Stanley~\cite{sta:lus}, Brenti~\cite{bre:lus}, and Br\"and\'en~\cite{bra:ulr}
for a plethora of examples.  We call a polynomial $p(t)=\sum_{k\ge0} a_k t^k$ {\em log-concave} if its coefficient sequence is.  In 2012, June Huh stunned the combinatorial world by using deep methods from algebraic geometry to prove the following result which generalizes a conjecture of Ronald Read~\cite{rea:icp} from 1968.
\begin{thm}[\cite{huh:mnp}]
For any graph $G$ we have that $P(G;t)$ is log-concave.\hqed
\end{thm}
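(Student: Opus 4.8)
The plan is to recognize that $P(G;t)$ is, up to an elementary factor, the characteristic polynomial of the graphic (cycle) matroid $M(G)$, and then to import the algebro-geometric inequalities that govern such polynomials. First I would pass to the unsigned polynomial $Q(G;t)=\sum_{k=0}^n \nbc_k(G)\,t^{n-k}$; by Theorem~\ref{whi} the coefficients of $P(G;t)$ agree with those of $Q$ up to sign, so $P$ is log-concave exactly when $Q$ is. Since $Q$ is multiplicative over disjoint unions and a product of log-concave polynomials with nonnegative coefficients is again log-concave, I may assume $G$ is connected. For connected $G$ one has $\nbc_n(G)=0$ (a forest on $n$ vertices has at most $n-1$ edges) and $t\mid P(G;t)$; dividing out this factor (which merely shifts, and so preserves, the coefficient sequence) reduces the problem to proving that the Whitney numbers $\nbc_0,\nbc_1,\dots,\nbc_{n-1}$ satisfy $\nbc_k^2\ge \nbc_{k-1}\nbc_{k+1}$. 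These are precisely the coefficients of the characteristic polynomial of $M(G)$, and because graphic matroids are realizable over $\bbC$ (via a signed incidence matrix), geometric tools become available.

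The second step is to realize the numbers $\nbc_k$ as intersection numbers on a projective variety. Starting from the graphic arrangement $\cH(G)$, I would form the De Concini--Procesi wonderful compactification $Y$ of the complement of the projectivized arrangement: a smooth projective variety of some dimension $r$ whose geometry is controlled by the lattice of flats of $M(G)$. On $Y$ there are two natural semiample (hence nef) divisor classes $\al$ and $\be$, arising as pullbacks of hyperplane classes under two maps to projective spaces attached to the matroid. The crucial input, and the technical heart of the argument, is the identity $\nbc_k = \deg\!\big(\al^{\,r-k}\be^{\,k}\big)$, which reinterprets each purely combinatorial Whitney number as a mixed degree of nef classes; establishing it requires the matroid-theoretic description of the Chow ring of $Y$ together with an Orlik--Solomon type computation identifying these degrees with the characteristic-polynomial coefficients.

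With that identification in hand, the log-concavity is immediate from classical algebraic geometry: for nef classes $\al,\be$ on a smooth projective variety of dimension $r$, the Khovanskii--Teissier inequality---itself a consequence of the Hodge index theorem applied after cutting down to a surface---gives
\[
\big(\al^{\,r-k}\be^{\,k}\big)^2 \ge \big(\al^{\,r-k+1}\be^{\,k-1}\big)\big(\al^{\,r-k-1}\be^{\,k+1}\big),
\]
which is exactly $\nbc_k^2\ge \nbc_{k-1}\nbc_{k+1}$. I expect the main obstacle to be the second step rather than this final inequality: the inequality is off-the-shelf once the relevant degrees are nef, whereas verifying the nefness of $\al,\be$ and the precise matching of mixed degrees with the $\nbc_k$ is where the real work lies. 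It is also the step that breaks for non-realizable matroids, where no honest variety $Y$ exists; there one must instead build the Chow ring of the matroid by hand and prove its Hodge--Riemann relations directly, but the realizability of graphic matroids lets me sidestep that difficulty and invoke genuine Hodge theory.
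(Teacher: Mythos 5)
The paper itself offers no proof of this theorem: it is stated with a citation and the survey explicitly defers to ``deep methods from algebraic geometry,'' so the only fair comparison is with the cited literature. There your outline is essentially the right one --- it is the strategy of Huh \cite{huh:mnp}, in the later matroid-theoretic form developed with Katz (and pushed to non-realizable matroids in \cite{ahk:htc}): pass to the unsigned Whitney numbers via Theorem~\ref{whi}, realize them as mixed degrees of two nef classes on the wonderful compactification of the complement of $\cH(G)$, and conclude by the Khovanskii--Teissier consequence of the Hodge index theorem. Your preliminary reductions are sound: the alternation of signs makes $P$ log-concave exactly when the $\nbc_k$ are, $P$ is multiplicative over components, and a product of nonnegative log-concave sequences \emph{with no internal zeros} (a hypothesis you should state, and which holds here) is again log-concave.

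The genuine gap is that the heart of the argument --- the identity expressing combinatorial coefficients as intersection numbers, together with the nefness of $\al$ and $\be$ --- is asserted rather than proved, and as stated it is not quite right. For a connected graph on $n$ vertices the variety $Y$ has dimension $r=n-2$, so the mixed degrees $\deg(\al^{\,r-k}\be^{\,k})$ form a list of $n-1$ numbers, while $\nbc_0,\dots,\nbc_{n-1}$ are $n$ numbers: the degrees are the unsigned coefficients of the \emph{reduced} characteristic polynomial $\chi_{M(G)}(t)/(t-1)$, not the $\nbc_k$ themselves. One recovers log-concavity of the $\nbc_k$ by multiplying back by $t-1$, whose unsigned coefficient sequence $1,1$ is log-concave with no internal zeros --- a step your reduction (which divides out $t$ rather than $t-1$) does not supply. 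Establishing the degree identity and the nefness requires the presentation of the Chow ring of $Y$ and a real computation; since you flag this as ``where the real work lies'' but do not carry it out, what you have is an accurate roadmap of the published proof rather than a proof.
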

\noindent  By developing a combinatorial version of Hodge theory, Adiprasito, Huh, and Katz~\cite{ahk:htc} were able to extend this result by proving the log-concavity of the characteristic polynomial of a matroid (a combinatorial object which generalizes both graphs and vector spaces).  

%\bibliographystyle{plain}

%\nocite{*}
%\bibliographystyle{abbrvnat}

\nocite{*}
\bibliographystyle{alpha}

\end{document}